\documentclass[11pt]{amsart}
\usepackage{amsfonts}
\usepackage[a4paper,margin=1in]{geometry}
\usepackage[T1]{fontenc}
\usepackage{color}
\usepackage{lmodern}
\usepackage{microtype}
\usepackage{amsmath,amssymb,amsthm,mathtools,mathrsfs}
\usepackage{booktabs}
\usepackage{enumitem}
\usepackage[hidelinks]{hyperref}
\usepackage{comment}

\newtheorem{theorem}{Theorem}[section]
\newtheorem{lemma}[theorem]{Lemma}
\newtheorem{proposition}[theorem]{Proposition}

\theoremstyle{remark}
\newtheorem{remark}[theorem]{Remark}

\begin{document}
\title{\textbf{Optimal concentration in the Paley--Wiener space}}
\author[L. D. Abreu]{Lu\'{\i}s Daniel Abreu}
\address{Lu\'is Daniel Abreu, Faculty of Mathematics \\
University of Vienna \\
Oskar-Morgenstern-Platz 1 \\
1090 Vienna, Austria}
\email{abreuluisdaniel@gmail.com}
\author[M. Speckbacher]{Michael Speckbacher}
\address{Michael Speckbacher, Acoustics Research Institute\\
Austrian Academy of Sciences\\
Dominikanerbastei 16, 1010 Vienna, Austria}
\email{michael.speckbacher@oeaw.ac.at}
\date{}

\begin{abstract}
Let $\Omega \subset \mathbb{R}$ be a bounded interval and let $PW(\Omega )$
be the corresponding Paley--Wiener space. For a measurable set $E\subset 
\mathbb{R}$ of finite measure, consider the largest possible fraction of the 
$L^{2}$-mass of a function in $PW(\Omega )$ that can lie in $E$. We prove
that this concentration is no larger than the concentration attained on an
interval of measure $\lvert E\rvert $. 

The proof has two steps. First, we establish an optimal concentration
theorem for 
trigonometric polynomials on the circle. Second, a
universality-type controlled limit of the reproducing kernel of 
trigonometric polynomials is used to transfer the circle theorem to the real
line, by expanding circles whose projection kernels are midpoint Riemann
sums for the Paley--Wiener sinc kernel.
\end{abstract}

\maketitle


\section{The optimal concentration problem in the Paley-Wiener space}

\label{sec:1}

We use the following convention for the Fourier transform 
\begin{equation*}
\widehat{f}(\xi )=\int_{\mathbb{R}}f(x)e^{-2\pi ix\xi }\,dx.
\end{equation*}%
For a bounded, measurable and connected frequency set $\Omega $, write 
\begin{equation*}
PW(\Omega )=\left\{ f\in L^{2}(\mathbb{R}):\text{supp}\widehat{f}\subset
\Omega \right\} \text{,}
\end{equation*}%
and let $P_{\Omega }$ be the orthogonal projection from $L^{2}(\mathbb{R})$
onto $PW(\Omega )$. If, for instance, $\Omega =[-\frac{1}{2},\frac{1}{2}]$,
then $P_{\Omega }$ is an operator defined by integrating in $\mathbb{R}$
against the reproducing kernel of $PW(\Omega )$, which can be explicitly
written as 
\begin{equation}
k(x-y)=\int_{-1/2}^{1/2}e^{2\pi i\xi (x-y)}\,d\xi =%
\begin{cases}
\frac{\sin \pi (t-\xi )}{\pi (t-\xi )}, & t\neq \xi \text{,} \\[6pt]
1, & t=\xi \text{.}%
\end{cases}
\label{eq:PWkernel}
\end{equation}%
For a measurable time set $E$, let $M_{E}$ denote the operator acting on $%
L^{2}(\mathbb{R})$ by multiplication by the indicatrix function of $E$,
denoted by $\mathbf{1}_{E}$. The concentration constant of $E$ relative to $%
\Omega $ is 
\begin{equation}
\mathfrak{C}_{\Omega }(E)=\sup_{0\neq f\in PW(\Omega )}\frac{\int_{E}\lvert
f(x)\rvert ^{2}\,dx}{\int_{\mathbb{R}}\lvert f(x)\rvert ^{2}\,dx}.
\label{eq:concentration-constant}
\end{equation}%
Equivalently, $\mathfrak{C}_{W}(E)$ is the operator norm of $M_{E}P_{\Omega
}M_{E}$ and 
\begin{equation}
\mathfrak{C}_{\Omega }(E)=\lVert M_{E}P_{\Omega }\rVert ^{2}=\lVert
P_{\Omega }M_{E}\rVert ^{2}=\lVert M_{E}P_{\Omega }M_{E}\rVert .
\label{eq:operator-form}
\end{equation}%
The two positive operators 
\begin{equation*}
P_{\Omega }M_{E}P_{\Omega }|_{PW(\Omega )}\qquad \text{and}\qquad
M_{E}P_{\Omega }M_{E}|_{L^{2}(E)}
\end{equation*}%
have the same nonzero spectrum: they are respectively $R_{E}^{\ast }R_{E}$
and $R_{E}R_{E}^{\ast }$, where $R_{E}:PW(\Omega )\rightarrow L^{2}(E)$ is
the restriction operator. Thus the problem is an operator-norm extremal
problem, or, what is the same, a Rayleigh-quotient problem for the positive
concentration operator $M_{E}P_{\Omega }M_{E}$.

\begin{theorem}[Optimal Paley--Wiener concentration]
\label{thm:main} Let $\Omega \subset \mathbb{R}$ be a bounded interval of
positive length. Let $E\subset \mathbb{R}$ be measurable with $0<\lvert
E\rvert <\infty $, and let $I$ be any interval with $\lvert I\rvert =\lvert
E\rvert $. Then 
\begin{equation}
\mathfrak{C}_{\Omega }(E)\leq \mathfrak{C}_{\Omega }(I).
\label{eq:main-concentration}
\end{equation}%
Equivalently, 
\begin{equation*}
\lVert P_{\Omega }M_{E}\rVert \leq \lVert P_{\Omega }M_{I}\rVert .
\end{equation*}
\end{theorem}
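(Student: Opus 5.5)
Following the two-step strategy outlined in the abstract, we first prove a discrete analogue of \eqref{eq:main-concentration} and then pass to the limit.

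\textbf{Reductions.} By translation and modulation invariance we may take $\Omega=[-\frac12,\frac12]$, and by dilation also $|E|$ arbitrary. Inner regularity of Lebesgue measure together with continuity of $E\mapsto \|M_EP_\Omega M_E\|$ under $L^1$-convergence of indicators lets us assume $E$ is a finite union of disjoint bounded intervals. Indeed, $\|M_EP_\Omega M_E-M_FP_\Omega M_F\|$ is bounded by the Hilbert--Schmidt norm, which is controlled by $|E\triangle F|^{1/2}$ because $|k|\le 1$ and $k\in L^2$.

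\textbf{Step 1 (circle theorem).} Let $\mathcal T_N$ be the space of trigonometric polynomials $\sum_{|n|\le N} c_n e^{2\pi i n t}$ on $\mathbb T=\mathbb R/\mathbb Z$, with reproducing kernel the Dirichlet kernel $D_N$. For $A\subset\mathbb T$ measurable, set
\[
\mathfrak C_N(A)=\sup_{0\ne p\in\mathcal T_N}\int_A|p|^2\Big/\int_{\mathbb T}|p|^2 .
\]
We aim to prove $\mathfrak C_N(A)\le \mathfrak C_N(J)$ for an arc $J$ with $|J|=|A|$. The plan is to use the largest eigenfunction $p$ of the concentration operator. Since $D_N$ is real and even, $p$ can be taken real up to rotation. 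We then rearrange: the quantity $\int_A|p|^2$ is dominated, via the Riesz--Sobolev inequality on the circle (Baernstein--Taylor), applied to the bilinear form $\langle M_A D_N M_A g,g\rangle=\iint_{A\times A} g(s)\overline{g(t)}D_N(s-t)\,ds\,dt$, by the same form with $A,g$ replaced by their symmetric decreasing rearrangements $A^*=J$ and $g^*$. The catch is that $D_N$ is not positive and decreasing, so rearrangement does not apply directly. The route we would try first is to write the top eigenvalue of $M_A P M_A$ as a limit of $\|(M_AP M_A)^m\|^{1/m}$, or as a trace limit $\operatorname{tr}((M_APM_A)^m)^{1/m}$. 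Expanding the trace turns it into iterated integrals over $A^m$ of cyclic products of Dirichlet kernels. Equivalently, using $P=\sum_n e_n\otimes e_n$, one can express things through the Fejér-type positive kernel $|D_N|^2$. One would then need a Brascamp--Lieb--Luttinger-type inequality for cyclic products, which is delicate because of the sign changes. An alternative formulation of Step 1 is the Toeplitz-matrix one: $\mathfrak C_N(A)$ is the top eigenvalue of the $(2N+1)\times(2N+1)$ Toeplitz matrix $(\widehat{\mathbf 1_A}(m-n))$. In this form one could try a discrete two-point or polarization symmetrization on $A$, showing that each polarization does not decrease the top eigenvalue, and then approximate the arc. \emph{This step is the main obstacle}, and we would invest most effort in a monotonicity-under-polarization argument for the extremal eigenfunction.

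\textbf{Step 2 (transfer).} For $L>0$, rescale $\mathbb T$ to the circle $\mathbb R/L\mathbb Z$ with $2N+1\approx L$. The projection kernel becomes
\[
K_L(x,y)=\frac1L\sum_{|n|\le N}e^{2\pi i n(x-y)/L},
\]
which is a midpoint Riemann sum of $\int_{-1/2}^{1/2}e^{2\pi i\xi(x-y)}d\xi$ with mesh $1/L$. Hence $K_L\to k$ uniformly on compacts. For the bounded set $E\subset(-L/2,L/2)$, the operators $M_EK_LM_E$ converge to $M_EP_\Omega M_E$ in Hilbert--Schmidt norm on $L^2(E)$, since the kernels converge uniformly on $E\times E$ and $E$ has finite measure. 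The norms therefore converge, and the same holds for the interval $I$. Applying Step 1 to the rescaled sets $E/L\subset\mathbb T$ gives $\|M_EK_LM_E\|\le\|M_IK_LM_I\|$ for each $L$. Letting $L\to\infty$ yields \eqref{eq:main-concentration}.
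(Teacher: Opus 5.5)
\textbf{Gap: Step 1, the circle theorem, is not proved.} Your transfer step (Step 2) is essentially the paper's argument in Section~\ref{sec:11}. The rescaled projection kernel is a midpoint Riemann sum for the sinc kernel, the operators on $L^2(E)$ converge in Hilbert--Schmidt norm for bounded $E$, and unbounded $E$ is handled by approximation. You use finite unions of intervals, the paper truncates to $E\cap[-R,R]$, and both are fine. But the circle theorem carries the entire content of the result. You only list strategies, and each one breaks at the same point:
\begin{itemize}
\item Riesz--Sobolev/Baernstein--Taylor, Brascamp--Lieb--Luttinger for cyclic products, and two-point polarization all need a kernel that is nonnegative and nonincreasing in circular distance. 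For a polarization step, for instance, you need $K(s-t)\geq K(s-\sigma t)$ whenever $s,t$ lie on the same side of the reflection.
\item The usual reduction from the eigenfunction $g$ to $\lvert g\rvert$ also uses positivity of the kernel.
\item The Dirichlet kernel oscillates and changes sign, so none of these tools is available.
\item Passing to $\lvert D_N\rvert^2$ does not help. It only expresses $\mathrm{tr}\bigl((M_APM_A)^2\bigr)$, i.e.\ the squared Hilbert--Schmidt norm, not the top eigenvalue, and higher cyclic products of $D_N$ do not reduce to it.
\end{itemize}
This is the same obstruction that limits the rearrangement arguments of Donoho--Stark and Baeza-Guasch to $\lvert\Omega\rvert\lvert E\rvert\leq 0.8$, resp.\ $\leq 1$.

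\textbf{What the paper does instead.} It avoids rearrangement altogether.
\begin{itemize}
\item It passes to the complement $F=\mathbb{T}\setminus E$, so one must minimize $\lambda_{\min}(T_F^{(N)})$. This becomes a joint minimization over sets with $\lvert F\rvert=\mu$ and densities $g=\lvert p\rvert^2/\lVert p\rVert^2$.
\item By Fej\'{e}r--Riesz these densities form the compact convex set $\mathcal{D}_N$, and the bathtub functional $g\mapsto\min_{\lvert F\rvert=\mu}\int_F g$ is concave on it.
\item Among minimizers it picks one with the maximal number of zeros. The two-sided perturbation $w(\eta\pm\varepsilon_*\psi)$ then forces all $N$ zero pairs onto the circle, so $g\propto q^2$ with $q=\prod_j\bigl(2\sin\frac{t-\theta_j}{2}\bigr)^{\nu_j}$.
\item Strict concavity of $\log\lvert q\rvert$ between consecutive roots makes the optimal lower-level set a union of arcs, each containing a block of roots.
\item Translating the root blocks relative to one another gives a second variation of the Rayleigh quotient that is an explicit negative sum of squares. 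It is strictly negative for nonconstant translations, which contradicts local minimality unless the level set is a single arc.
\end{itemize}
Without this argument, or some other proof of the circle theorem, your proposal only reduces the theorem to an equally hard unproved statement.
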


This confirms the interval extremal conjecture formulated by Donoho and
Stark in 1989 \cite[Conjecture 1]{DonohoStark1989}. Their subsequent
rearrangement argument proves the result under the restriction $\lvert
\Omega \rvert \lvert E\rvert \leq 0.8$ \cite{DonohoStark1993}. Baeza--Guasch
extended this range to $\lvert \Omega \rvert \lvert E\rvert \leq 1$ \cite%
{BaezaGuasch2023}. Theorem~\ref{thm:main} removes all restrictions on the
product $\lvert \Omega \rvert \lvert E\rvert $.

Other recent related advances were triggered by the authors's reformulation
of Donoho-Stark conjecture for Daubechies localization operators \cite%
{Daubechies} with Gaussian windowed short-time Fourier transforms
(equivalent to a shape optimization problem in the Fock space of entire
functions \cite{Seip0}, optimized by Euclidean disks), in \cite[Conjecture 1]%
{AbrSpeck}. This conjecture was first proved by Galbis for radial, bounded
and integrable symbols \cite{Galbis}, and then solved completely by Nicola
and Tilli in the breakthrough paper \cite{NicTill}, using an ingenious
combination of entire function theory, measure theoretical arguments (in
particular, symmetric decreasing rearrangements) and the isoperimetrical
inequality. Shortly after, a previous observation by the first named author
and D\"{o}rfler \cite{AbreDorf}, suggesting that Daubechies-Paul
localization operators for the wavelet transform with Cauchy wavelets
(equivalent to concentration operators in Bergman spaces \cite{Seip0}) are
optimized by pseudohyperbolic disks, has been confirmed by Ramos and Tilli 
\cite{RamTilli}, adapting the methods of \cite{NicTill} to the hyperbolic
setting. Then, Frank \cite{Frank} extended the results to general coherent
states and solved the concentration problem for analytic polynomials in the
sphere, where spherical caps provide the optimal shape. The problem in the
Hardy space has been solved by G\'{o}mez, Kalaj, Melentijevi\'{c} and Ramos
in \cite{ProcLMS}, using a controlled limiting process, where the scale of
Bergman spaces optimal concentrations approaches the lower endpoint, in a
work whose main goal was the extension to wavelets of the stability results
of G\'{o}mez, Guerra, Ramos and Tilli \cite{GuerraGomesTilliRamos}. Similar
stability problems have been considered by Garc\'{\i}a-Ferrero and
Ortega-Cerd\'{a} for analytic polynomials in the sphere \cite{CIMP}.

Despite this impressive body of work, the original Donoho-Stark conjecture
remained open and cannot be solved using the above described circle of
ideas. The proof offered in this manuscript follows a completely new
direction and consists of two steps: first we solve a concentration problem
for \emph{analytic trigonometric polynomials on the circle} (Theorem \ref%
{thm:circle}, which has independent interest) and then take the controlled
limit to yield the result in the Paley-Wiener space. The controlled limiting
process that proves Theorem \ref{thm:main} (see section \ref{sec:11} for
details) is based on the universality-type limit (see \cite{Lubinsky} for
limits of this nature) connecting the reproducing kernel of analytic
trigonometric polynomials on the circle with the reproducing kernel of the
Paley-Wiener space (\ref{eq:PWkernel}), 
\begin{equation*}
\lim_{N\rightarrow \infty }\frac{1}{2N+1}K_{N}(e^{i\frac{\pi t}{N}},e^{i%
\frac{\pi \xi }{N}})=\frac{\sin \pi (t-\xi )}{\pi (t-\xi )}\text{, \ \ \
where \ \ \ \ }K_{N}(z,w)=\sum\limits_{j=-N}^{N}\left( z\overline{w}\right)
^{j}\text{,}
\end{equation*}%
which, for the problem at hand, will be taken by simultaneous expanding
large-degrees and large-circles. The methods of \cite{NicTill} have proved
to be extremely flexible, but they cannot be adapted to the geometry of the
Paley-Wiener space and also fail to work for analytic polynomials in the
circle for at least two reasons. First, the curvature term $\Delta \log
K_{N}^{+}(z,z)$ is radial but not constant, and this is essential in \cite%
{NicTill}. Second,\ the geometric optimization provided by the variations of
the isoperimetric inequality does not seem to have a direct analogue,
neither in the circle, nor in the geometric version provided by the
Beurling-Lax-Riesz decomposition, $L^{2}(\mathbb{T})=H(\mathbb{D})\oplus H(%
\mathbb{D}^{c})$, where the ambient space is the double-sheet space $\mathbb{%
D}\bigsqcup \mathbb{D}^{c}$.

\textbf{Origins of the proof and the use of AI}

A paragraph aimed at clarifying the sources of the leading ideas in this
paper, including an interesting saga of interactions between human
researchers and language models is in order. The leading idea of approaching
the problem by first proving the circle Theorem~\ref{thm:circle} and then
using a controlled limit process, is due to the authors and has been
partially inspired by the controlled limiting process used to solve the
shape optimization problem in the Hardy space \cite{ProcLMS}. To prove
Theorem~\ref{thm:circle} we had considerable help from modern AI tools,
primarily GPT-5.6 Sol and GPT-5.5 Pro. In fact, the core technical
innovation in the proof, that extended the result from $N=2$ (second degree
polynomials in the circle, which is the only easy non-trivial case) to $N=3$
and subsequently to general $N$, was found during long exploratory
conversations that started with GPT 5.5 Pro and were continued with GPT-5.6
Sol. This preliminary version still keeps part of the interesting graphic
language used by the Language Model.

\section{The optimal concentration problem for analytic polynomials in the
circle}

\label{sec:2}

\subsection{The concentration operator}

Let 
\begin{equation*}
\mathbb{T}=\mathbb{R}/(2\pi \mathbb{Z}),\qquad \mathcal{A}_{N}=\text{span}_{%
\mathbb{C}}\{1,e^{it},\ldots ,e^{iNt}\}.
\end{equation*}%
The circle carries angular measure $dt$, so that $\lvert \mathbb{T}\rvert
=2\pi $. Let $P_{N}$ be the orthogonal projection from $L^{2}(\mathbb{T})$
onto $\mathcal{A}_{N}$. For a measurable set $E\subset \mathbb{T}$, define 
\begin{equation*}
T_{E}^{(N)}=\left. P_{N}M_{E}P_{N}\right\vert _{\mathcal{A}_{N}}\text{.}
\end{equation*}%
Its largest eigenvalue is 
\begin{equation}
\lambda _{\max }\left( T_{E}^{(N)}\right) =\sup_{0\neq p\in \mathcal{A}_{N}}%
\frac{\int_{E}\lvert p(t)\rvert ^{2}\,dt}{\int_{\mathbb{T}}\lvert p(t)\rvert
^{2}\,dt}\text{.}  \label{eq:circle-rayleigh}
\end{equation}
Theorem~\ref{thm:main} will follow as a limit case of the following result
on the circle, which has independent interest.

\begin{theorem}[Circle interval theorem]
\label{thm:circle} Let $N\geq 0$. If $E\subset \mathbb{T}$ is measurable and 
$I\subset \mathbb{T}$ is a circle interval with $\lvert I\rvert =\lvert
E\rvert $, then 
\begin{equation*}
\lambda _{\max }\left( T_{E}^{(N)}\right) \leq \lambda _{\max }\left(
T_{I}^{(N)}\right) \text{.}
\end{equation*}
\end{theorem}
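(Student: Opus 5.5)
We propose to prove Theorem~\ref{thm:circle} by a rearrangement argument applied to the modulus of an extremal polynomial. The goal is to reduce to sets $E$ that are super-level sets of $\lvert p\rvert$ for some $p\in\mathcal{A}_N$, and then show that among such sets the interval wins.

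\emph{Step 1: existence and structure of an extremal pair.} By compactness of the unit sphere of the finite-dimensional space $\mathcal{A}_N$ and weak-$*$ compactness of $\{0\le \varphi\le 1\}$ in $L^\infty(\mathbb{T})$, we consider the relaxed problem
\[
\Lambda(m)=\sup\Big\{\int_{\mathbb{T}}\varphi\lvert p\rvert^2\,dt:\ \lVert p\rVert_{2}=1,\ 0\le\varphi\le 1,\ \int\varphi\,dt=m\Big\}.
\]
For fixed $p$ the bathtub principle makes $\varphi=\mathbf{1}_{\{\lvert p\rvert^2>s\}}$ optimal, with $s$ chosen so that the measure is $m$. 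Level sets of the trigonometric polynomial $\lvert p\rvert^2$ (of degree $N$) have measure zero unless $p\equiv 0$. Hence the supremum is attained by a genuine set $E^*=\{\lvert p\rvert^2>s\}$. This set is a union of at most $N$ arcs, since $\lvert p\rvert^2-s$ has at most $2N$ zeros on $\mathbb{T}$. It therefore suffices to show $\Lambda(m)=\lambda_{\max}(T_I^{(N)})$ for $\lvert I\rvert=m$, i.e. that an optimal $E^*$ is a single arc.

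\emph{Step 2: first-order conditions.} At an optimum, $p$ is the top eigenvector of $T_{E^*}^{(N)}$. Moreover, $E^*$ is the super-level set of $\lvert p\rvert^2$ at a level $s$ such that $\lvert p\rvert^2=s$ at all $2k$ endpoints of the $k$ arcs. We would parametrize $E$ by its endpoints $a_1<b_1<\dots<a_k<b_k$ and use Hadamard's formula
\[
\partial_{b_j}\lambda=\lvert p(b_j)\rvert^2,\qquad \partial_{a_j}\lambda=-\lvert p(a_j)\rvert^2 .
\]
We then show that merging arcs increases $\lambda$. Concretely, we would take a gap $(b_j,a_{j+1})$ and move mass from the shortest arc into that gap, translating the arc by rotation so the gap shrinks. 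A rotation $t\mapsto t+\theta$ maps $\mathcal{A}_N$ to itself, so rotating a single component $A$ of $E$ is measure preserving. The resulting derivative is $\lvert p(\text{right end of }A)\rvert^2-\lvert p(\text{left end of }A)\rvert^2=s-s=0$, which is consistent with criticality. A second-variation computation is therefore needed.

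\emph{Step 3 (main obstacle): ruling out multi-arc critical points.} Here we would compute the second variation of $\lambda$ under rotating one arc toward its neighbour. It is the first-order perturbation of the eigenvalue, $\langle (T_{E_\theta}-T_E)p,p\rangle$, plus a nonnegative term from eigenvector relaxation. We aim to show this is strictly positive whenever $k\ge 2$, using the analyticity of $p$ and the Fejér--Riesz factorization $\lvert p\rvert^2=\lvert q\rvert^2$ with $q$ having all zeros in $\overline{\mathbb{D}}$. The hope is that $\log\lvert p\rvert$ is subharmonic-in-the-disk data whose boundary behaviour forces $\lvert p\rvert^2$ to be "unimodal relative to $E$". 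In other words, the $2N$ crossings of level $s$ cannot be arranged so that $E^*$ has several components while remaining extremal.

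If the second-variation route fails, we would try an induction on $N$. Write $p(z)=(z-\alpha)r(z)$ and compare $\lvert p\rvert^2$ with a Möbius-transported weight to reduce the degree, with the case $N=2$ (a direct computation) as the base. Once $k=1$ is established, rotation invariance shows every arc of length $m$ gives the same $\lambda_{\max}$, which proves the theorem.
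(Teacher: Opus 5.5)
Your Steps 1--2 are a reasonable setup, with one slip: $\{\lvert p\rvert^2=s\}$ has full measure when $\lvert p\rvert^2$ is constant, e.g.\ $p=e^{ijt}$, so that case must be discarded separately. The real problem is that Step 3 is the entire content of the theorem, and the proposal gives no argument for it. Worse, the variation you chose works against you. Rigidly rotating an arc $[a,b]$ of $E^*=\{\lvert p\rvert^2>s\}$ with $p$ frozen gives second derivative $(\lvert p\rvert^2)'(b)-(\lvert p\rvert^2)'(a)\le 0$. This is strictly negative at transverse crossings, because $\lvert p\rvert^2$ rises through $s$ at $a$ and falls through $s$ at $b$. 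You would therefore need the relaxation term $2\sum_{j\ge 1}\lvert p(b)\overline{p_j(b)}-p(a)\overline{p_j(a)}\rvert^2/(\lambda-\lambda_j)$, with $(p_j)$ the remaining normalized eigenvectors of $T_{E^*}^{(N)}$, to strictly dominate it. That requires global spectral information that nothing in your plan controls. The subharmonicity heuristic cannot supply it: for $p=1+z^N$, $\lvert p\rvert^2=2+2\cos Nt$ has $N$ separate humps on $\mathbb{T}$. So excluding multi-arc optimizers is a statement about extremality, not about analyticity. The M\"obius-induction fallback also has no mechanism, since disk automorphisms preserve neither $\mathcal{A}_N$, nor the measure $dt$, nor $\lvert E\rvert$.

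The paper closes this gap with two ingredients absent from your plan.

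\textbf{Zero saturation.} After passing to complements, the paper minimizes the concave functional $g\mapsto\min_{\lvert F\rvert=\mu}\int_F g$ over the compact convex set $\mathcal{D}_N$ (equivalently, in your formulation, it maximizes a convex functional). A two-sided perturbation $g_\pm=w(\eta\pm\varepsilon_{\ast}\psi)$ then shows that an extremizer with the maximal number of circle zeros has all $N$ zero pairs on $\mathbb{T}$. Hence $g=q^2/D$ with $q=\prod_j\bigl(2\sin\frac{t-\theta_j}{2}\bigr)^{\nu_j}$. Your Fej\'er--Riesz factorization with zeros in $\overline{\mathbb{D}}$ does not give this. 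It is what yields the one-hill lemma and places every root inside a component of the lower-level set.

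\textbf{Moving the function, not the set.} Rather than moving set components against a relaxing eigenvector, the paper translates the root blocks $Q_\ell$ independently inside $\mathcal{V}_N$. The level set follows at a common level with fixed total measure. Stationarity, the pointwise identity $H_\ell H_k=qS_{\ell k}$ for $\ell\neq k$, and translation invariance $\mathsf{H}\mathbf{1}=0$ then determine the Hessian from boundary data alone, with no spectral input. Along $c=sa$ the second variation is $-\frac{2\rho}{D}$ times a sum of nonnegative squares plus cross terms $\rho\bigl(A_k(u_\ell)-A_k(v_\ell)\bigr)(a_k-a_\ell)^2$. These cross terms are strictly positive for nonconstant $a$ because the cotangent sums $A_k$ are strictly decreasing. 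This contradicts local minimality unless there is a single component. That strict sign, which depends on all roots lying on the circle, is exactly what your Step 3 lacks.
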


The case $N=0$, as well as the cases $\lvert E\rvert =0$ and $\lvert E\rvert
=2\pi $, are immediate. Therefore, assume from now on that 
\begin{equation*}
N\geq 1,\qquad 0<\lvert E\rvert <2\pi \text{.}
\end{equation*}%
We now observe that taking complements turns the maximization into a
minimization problem. Set $F=\mathbb{T}\setminus E$. Then 
\begin{equation*}
T_{E}^{(N)}+T_{F}^{(N)}=I_{\mathcal{A}_{N}}\text{,}
\end{equation*}%
gives 
\begin{equation}
\lambda _{\max }\left( T_{E}^{(N)}\right) =1-\lambda _{\min }\left(
T_{F}^{(N)}\right) \text{.}  \label{eq:complement-eigenvalue}
\end{equation}%
For $0<\mu <2\pi $, define 
\begin{equation}
\Lambda _{N}(\mu )=\inf_{\substack{ F\subset \mathbb{T},\ \lvert F\rvert
=\mu  \\ 0\neq p\in \mathcal{A}_{N}}}\frac{\int_{F}\lvert p(t)\rvert ^{2}dt}{%
\int_{\mathbb{T}}\lvert p(t)\rvert ^{2}dt}\text{.}  \label{eq:Lambda}
\end{equation}%
It is enough to prove that this joint infimum is attained when $F$ is a
circle interval.

\subsection{A variational problem}

\label{sec:3}

Let $\mathcal{D}_{N}$ be the set of normalized nonnegative trigonometric
polynomials 
\begin{equation*}
\mathcal{D}_{N}=\left\{ g(t)=\sum_{k=-N}^{N}c_{k}e^{ikt}:g\geq 0,\quad
c_{-k}=\overline{c_{k}},\quad \int_{\mathbb{T}}g=1\right\} \text{.}
\end{equation*}%
We shall use the scalar Fej\'{e}r--Riesz theorem.

\begin{theorem}[Fej\'er--Riesz factorization]
\label{thm:fejer-riesz} If $g$ is a nonnegative trigonometric polynomial of
degree at most $N$, then there exists an analytic polynomial $p\in\mathcal{A}%
_N$ such that 
\begin{equation*}
g(t)=|p(e^{it})|^2.
\end{equation*}
\end{theorem}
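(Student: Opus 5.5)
The plan is the classical route via the Riesz--Herglotz (Carathéodory) representation and spectral factorization of nonnegative trigonometric polynomials.

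\textbf{Reduction.} Write $g(t)=\sum_{k=-N}^{N}c_k e^{ikt}$ with $c_{-k}=\overline{c_k}$ and $g\ge 0$. If $g\equiv 0$ take $p=0$. Otherwise we seek $p(z)=\sum_{j=0}^{N}a_j z^j$ with $|p(e^{it})|^2=g(t)$. Multiplying by $e^{iNt}$, consider the algebraic polynomial
\begin{equation*}
Q(z)=z^N\sum_{k=-N}^{N}c_k z^k ,
\end{equation*}
of degree at most $2N$, which satisfies $Q(e^{it})=e^{iNt}g(t)$.

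\textbf{Root symmetry.} The hermitian symmetry $c_{-k}=\overline{c_k}$ gives
\begin{equation*}
Q(z)=z^{2N}\,\overline{Q(1/\overline z)} .
\end{equation*}
Hence the nonzero roots of $Q$ come in pairs $w,\,1/\overline w$ with equal multiplicities. (If $c_N=0$ the degree drops; a root at $0$ then pairs with a missing root "at infinity", and this bookkeeping is handled by the same identity.)

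\textbf{Roots on the circle.} Roots on $|z|=1$ have even multiplicity. Indeed, $g\ge 0$ vanishes at such a point $t_0$, and a nonnegative real-analytic function has a zero of even order there. So the roots on the circle can also be split evenly into two halves.

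\textbf{Assembling $p$.} Choose one representative from each pair $\{w,1/\overline w\}$ with $|w|\ge 1$ (say), together with half of each unimodular root, and set
\begin{equation*}
p(z)=c\prod_j (z-w_j).
\end{equation*}
This is a polynomial of degree at most $N$. For each factor one checks
\begin{equation*}
|e^{it}-w|\,|e^{it}-1/\overline w|=|w|^{-1}|e^{it}-w|^2 ,
\end{equation*}
which follows from $|e^{it}-1/\overline w|=|w|^{-1}|\overline w e^{it}-1|=|w|^{-1}|e^{it}-w|$. Therefore
\begin{equation*}
|Q(e^{it})|=C\,|p(e^{it})|^2 .
\end{equation*}
Since $g=|Q(e^{it})|$, choosing the constant $c$ appropriately gives $g=|p|^2$. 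The constant is positive because $g\ge 0$ and $g\not\equiv 0$.

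\textbf{Main obstacle.} The only delicate step is the even multiplicity of unimodular roots, together with the degree bookkeeping when $c_N=0$. I would handle the latter by perturbation: apply the argument to $g+\varepsilon$, whose $Q$ has no unimodular roots, giving polynomials $p_\varepsilon$ with coefficient norm $\|p_\varepsilon\|_2^2=\int (g+\varepsilon)$, which is bounded. A compactness argument in the finite-dimensional space $\mathcal{A}_N$ then yields a limit $p$ with $|p|^2=g$.
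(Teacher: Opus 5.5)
Your argument is correct. It is the standard root-pairing proof of the classical Fej\'er--Riesz theorem, which the paper does not prove at all but simply cites from Dritschel--Rovnyak. The identity $Q(z)=z^{2N}\overline{Q(1/\overline{z})}$ forces the order of vanishing of $Q$ at $0$ to equal $2N-\deg Q$, which settles the case $c_N=0$, and your real-analyticity remark settles the unimodular roots, so nothing further is required.

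Two cosmetic slips. The Riesz--Herglotz representation announced at the outset is never used. The closing perturbation $g+\varepsilon$ changes only $c_0$, so it removes unimodular zeros but does nothing for the $c_N=0$ bookkeeping it is said to handle.
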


This classical factorization is discussed, for example, in \cite%
{DritschelRovnyak}. By Theorem~\ref{thm:fejer-riesz}, these are exactly the
normalized energy densities 
\begin{equation*}
g(t)=\frac{\lvert p(t)\rvert ^{2}}{\int_{\mathbb{T}}\lvert p(t)\rvert ^{2}dt}%
,\qquad 0\neq p\in \mathcal{A}_{N}\text{.}
\end{equation*}%
For $g\in \mathcal{D}_{N}$, define its lower-tail functional by 
\begin{equation}
L_{\mu }(g)=\min_{\lvert F\rvert =\mu }\int_{F}g(t)dt\text{.}
\label{eq:lower-tail}
\end{equation}%
Then 
\begin{equation}
\Lambda _{N}(\mu )=\min_{g\in \mathcal{D}_{N}}L_{\mu }(g).
\label{eq:Lambda-tail}
\end{equation}
For a fixed density, the minimizing set can be obtained from the following version
of the bathtub principle. We provide a proof for completeness.

\begin{lemma}[Bathtub principle]
\label{lem:bathtub} Let $g$ be continuous on $\mathbb{T}$. For some $\tau\in%
\mathbb{R}$, a minimizing set in \eqref{eq:lower-tail} can be chosen so that 
\begin{equation*}
\{g<\tau\}\subset F\subset\{g\leq\tau\}, \qquad \lvert F\rvert=\mu.
\end{equation*}
Thus one minimizes the integral by selecting the $\mu$ smallest values of $g$%
.
\end{lemma}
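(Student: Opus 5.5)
We will use the distribution function of $g$ to find the threshold $\tau$, build a set $F$ squeezed between the strict and non-strict sublevel sets, and then verify minimality by a pointwise comparison against an arbitrary competitor.

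\emph{Choice of $\tau$.} Let $\phi(s)=\lvert\{g<s\}\rvert$ and $\psi(s)=\lvert\{g\le s\}\rvert$ for $s\in\mathbb{R}$. Both are nondecreasing, with $\phi\le\psi$. They tend to $0$ as $s\to-\infty$ and to $2\pi$ as $s\to+\infty$, since $g$ is continuous, hence bounded, on the compact set $\mathbb{T}$. Set
\begin{equation*}
\tau=\inf\{s:\psi(s)\ge\mu\}.
\end{equation*}
Right-continuity of $\psi$, which follows from continuity of measure for decreasing intersections, gives $\psi(\tau)\ge\mu$. For $s<\tau$ we have $\psi(s)<\mu$. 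Since $\{g<\tau\}=\bigcup_{s<\tau}\{g\le s\}$, continuity from below gives $\phi(\tau)\le\mu$. Hence
\begin{equation*}
\lvert\{g<\tau\}\rvert\le\mu\le\lvert\{g\le\tau\}\rvert.
\end{equation*}

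\emph{Construction of $F$.} Lebesgue measure is nonatomic, so we can choose a measurable $G\subset\{g=\tau\}$ with $\lvert G\rvert=\mu-\lvert\{g<\tau\}\rvert$. For instance, take $G=\{g=\tau\}\cap[0,r]$ and choose $r$ by continuity of $r\mapsto\lvert\{g=\tau\}\cap[0,r]\rvert$. Setting $F=\{g<\tau\}\cup G$ gives $\{g<\tau\}\subset F\subset\{g\le\tau\}$ and $\lvert F\rvert=\mu$.

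\emph{Minimality.} Let $F'$ be any measurable set with $\lvert F'\rvert=\mu$. Then $\lvert F\setminus F'\rvert=\lvert F'\setminus F\rvert$. On $F\setminus F'$ we have $g\le\tau$. On $F'\setminus F$ we have $g\ge\tau$, because every point with $g<\tau$ already lies in $F$. Therefore
\begin{equation*}
\int_{F}g-\int_{F'}g=\int_{F\setminus F'}g-\int_{F'\setminus F}g\le\tau\lvert F\setminus F'\rvert-\tau\lvert F'\setminus F\rvert=0.
\end{equation*}
So $F$ attains the infimum, and the minimum in \eqref{eq:lower-tail} is achieved.

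The only delicate point is the choice of $\tau$ and of the level-set piece $G$ when $\{g=\tau\}$ has positive measure. Both are handled by the monotone-limit argument above together with nonatomicity. For $g\in\mathcal{D}_N$ with $g\not\equiv$ const this case is in fact vacuous, since level sets of a nonconstant trigonometric polynomial are finite, but the general argument covers it anyway.
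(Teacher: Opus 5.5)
Your proposal is correct and follows the same route as the paper: choose $\tau$ with $\lvert\{g<\tau\}\rvert\le\mu\le\lvert\{g\le\tau\}\rvert$, fill part of the level set $\{g=\tau\}$, and compare with any competitor using $\lvert F\setminus F'\rvert=\lvert F'\setminus F\rvert$ together with $g\le\tau$ on $F\setminus F'$ and $g\ge\tau$ on $F'\setminus F$. The only difference is that you also justify the existence of $\tau$ (through the monotone limits of the distribution function) and of the level-set piece $G$ (through the intermediate value theorem), which the paper leaves implicit.
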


\begin{proof}
Choose $\tau $ so that 
\begin{equation*}
\lvert \{g<\tau \}\rvert \leq \mu \leq \lvert \{g\leq \tau \}\rvert ,
\end{equation*}%
and fill the required portion of the level set $\{g=\tau \}$. If $A$ is any
other set of measure $\mu $, then 
\begin{equation*}
\int_{A}g-\int_{F}g=\int_{A\setminus F}(g-\tau )+\int_{F\setminus A}(\tau
-g)\geq 0.
\end{equation*}
The first equality holds because $|A\backslash F|-|F\backslash A|=|A|-|A\cap
F|-(|F|-|A\cap F|)=0$.
\end{proof}

The existence of a global minimizer follows from a compactness and concavity
argument.

\begin{lemma}
\label{lem:compact-concave} The set $\mathcal{D}_N$ is compact in its
coefficient space. The map $\mathscr L_\mu:\mathcal{D}_N\to\mathbb{R}$ is
continuous and concave. Consequently it has a global minimizer.
\end{lemma}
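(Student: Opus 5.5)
We prove the three assertions in order: compactness of $\mathcal{D}_N$, concavity and continuity of $L_\mu$ (written $\mathscr L_\mu$ in the statement), and then existence of a minimizer.

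\textbf{Step 1: compactness.} We identify $\mathcal{D}_N$ with a subset of the coefficient space $\{(c_0,\dots,c_N)\}\cong\mathbb{R}\times\mathbb{C}^N$, using $c_{-k}=\overline{c_k}$. Since $\int_{\mathbb{T}}e^{ikt}\,dt=2\pi\delta_{k0}$, the normalization $\int g=1$ forces $c_0=1/(2\pi)$. Nonnegativity then bounds every coefficient:
\begin{equation*}
\lvert c_k\rvert=\frac{1}{2\pi}\Bigl\lvert\int_{\mathbb{T}} g(t)e^{-ikt}\,dt\Bigr\rvert\leq \frac{1}{2\pi}\int_{\mathbb{T}} g=\frac{1}{2\pi},
\end{equation*}
so $\mathcal{D}_N$ is bounded. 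For closedness, suppose coefficient vectors converge. Then the corresponding polynomials converge uniformly on $\mathbb{T}$, because the degree is fixed. Hence the limit is still nonnegative and still has integral $1$. A closed bounded subset of a finite-dimensional space is compact. Note also that $\mathcal{D}_N$ is convex, although the argument does not need this.

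\textbf{Step 2: concavity and continuity.} We write
\begin{equation*}
L_\mu(g)=\inf_{\lvert F\rvert=\mu}\int_F g .
\end{equation*}
Each map $g\mapsto\int_F g$ is linear in $g$, hence affine in the coefficients. A pointwise infimum of affine functions is concave. The minimum in \eqref{eq:lower-tail} is attained by Lemma~\ref{lem:bathtub}, since each $g\in\mathcal{D}_N$ is continuous, but concavity only uses the infimum.

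For continuity, take $g,h\in\mathcal{D}_N$ and any $F$ with $\lvert F\rvert=\mu$. Then
\begin{equation*}
\Bigl\lvert\int_F g-\int_F h\Bigr\rvert\leq \mu\,\lVert g-h\rVert_\infty\leq \mu\sum_{k=-N}^{N}\lvert c_k(g)-c_k(h)\rvert .
\end{equation*}
Taking infima over $F$ on both sides gives
\begin{equation*}
\lvert L_\mu(g)-L_\mu(h)\rvert\leq \mu\,\lVert g-h\rVert_\infty .
\end{equation*}
So $L_\mu$ is Lipschitz in the coefficient norm, and in particular continuous. One could instead appeal to the fact that a concave function is continuous on the relative interior of its domain, but the direct Lipschitz bound also covers the boundary of $\mathcal{D}_N$.

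\textbf{Step 3: existence of a minimizer.} A continuous function on a compact set attains its minimum (Weierstrass). This gives the global minimizer and justifies writing $\min$ in \eqref{eq:Lambda-tail}.

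\textbf{Remark on the use of concavity.} Concavity is not needed for existence. It will matter later: a concave function on the compact convex set $\mathcal{D}_N$ attains its minimum at an extreme point. Presumably these are the densities $\lvert p\rvert^2$ that are extremal, for example those whose Fejér–Riesz factor $p$ from Theorem~\ref{thm:fejer-riesz} has all zeros on the circle.

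\textbf{Main obstacle.} The only point needing care is that continuity must hold uniformly over the infimum in $F$. The sup-norm bound above handles this, so I expect no real difficulty here.
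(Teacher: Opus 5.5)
Your proof is correct and follows the paper's argument essentially step for step: you bound $\lvert c_k\rvert\le 1/(2\pi)$ and use closedness of the defining conditions, you obtain concavity as an infimum of the linear maps $g\mapsto\int_F g$, and you conclude with Weierstrass. The only cosmetic difference is that you bound $\lvert\mathscr L_\mu(g)-\mathscr L_\mu(h)\rvert$ by $\mu\lVert g-h\rVert_\infty$, where the paper uses $\int_{\mathbb T}\lvert g-h\rvert$. Your closing speculation on extreme points matches how concavity is used in Lemma~\ref{lem:max-zeros}, but it needs the qualifier that $p$ have exact degree $N$, i.e.\ $Z(g)=N$; for instance, when $N\ge1$ the constant density is the average of $(1\pm\cos t)/(2\pi)$ and so is not extreme.
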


\begin{proof}
For $g\in \mathcal{D}_{N}$, 
\begin{equation*}
c_{k}=\frac{1}{2\pi }\int_{\mathbb{T}}g(t)e^{-ikt}\,dt,\qquad \lvert
c_{k}\rvert \leq \frac{1}{2\pi }.
\end{equation*}%
The defining conditions of $\mathcal{D}_{N}$ are closed, so
finite-dimensional compactness follows. Moreover, 
\begin{equation*}
\lvert \mathscr L_{\mu }(g)-\mathscr L_{\mu }(h)\rvert \leq \int_{\mathbb{T}%
}\lvert g-h\rvert ,
\end{equation*}%
which gives continuity. Finally, for $0\leq s\leq 1$, 
\begin{equation*}
\mathscr L_{\mu }(sg+(1-s)h)\geq s\mathscr L_{\mu }(g)+(1-s)\mathscr L_{\mu
}(h),
\end{equation*}%
because $\mathscr L_{\mu }$ is the infimum of the linear functionals $%
g\mapsto \int_{F}g$ over sets of measure $\mu $.
\end{proof}

\section{Zero saturation of an extremal density}

\label{sec:4}

\subsection{Selecting one extremizer}

For a nonzero nonnegative trigonometric polynomial $g$, define 
\begin{equation*}
Z(g)=\frac{1}{2}\sum_{\theta :g(\theta )=0}\text{ord}_{\theta }g.
\end{equation*}%
Every zero has even order, and $Z(g)\in \{0,\ldots ,N\}$. Among all global
minimizers of $\mathscr L_{\mu }$, choose one for which $Z(g)$ is maximal.
We will use only this minimizer; no uniqueness is claimed or needed. Let its
distinct zeros be $\theta _{1},\ldots ,\theta _{r}$, with respective orders $%
2\nu _{1},\ldots ,2\nu _{r}$, and put 
\begin{equation*}
M=\nu _{1}+\cdots +\nu _{r}.
\end{equation*}%
Since $z^{N}g(t)$, with $z=e^{it}$, is an ordinary polynomial of degree at
most $2N$, we have $M\leq N$.

\subsection{Factoring out the zero set}

\begin{lemma}
\label{lem:zero-factor} Define 
\begin{equation*}
w(t) = \prod_{j=1}^{r} \left(2\sin\frac{t-\theta_j}{2}\right)^{2\nu_j}.
\end{equation*}
Then 
\begin{equation}  \label{eq:g-weta}
g(t)=w(t)\eta(t),
\end{equation}
where $\eta$ is a real trigonometric polynomial of degree at most $N-M$ and $%
\eta(t)>0$ for every $t\in\mathbb{T}$.
\end{lemma}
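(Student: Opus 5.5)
The plan is to pass to the complex variable $z=e^{it}$ and divide out the zeros there. First, I will identify $w$ with a Laurent polynomial. For each $j$,
\begin{equation*}
\left(2\sin\tfrac{t-\theta_j}{2}\right)^2=2-2\cos(t-\theta_j)=-e^{-i\theta_j}z^{-1}(z-e^{i\theta_j})^2 .
\end{equation*}
Hence $w(t)=c\,z^{-M}\prod_j (z-e^{i\theta_j})^{2\nu_j}$ with $|c|=1$. This shows $w$ is a real nonnegative trigonometric polynomial of degree $M$, vanishing exactly at the $\theta_j$ with orders $2\nu_j$.

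Next I will divide the polynomial attached to $g$. Put $G(z)=z^N g(t)$, an ordinary polynomial of degree at most $2N$. Each $\theta_j$ is a zero of $g$ of order $2\nu_j$ as a function of $t$. Since $t\mapsto e^{it}$ is a local biholomorphism, $e^{i\theta_j}$ is a root of $G$ of multiplicity at least $2\nu_j$; in fact exactly $2\nu_j$. By unique factorization, $G(z)=\prod_j (z-e^{i\theta_j})^{2\nu_j}H(z)$ with $\deg H\le 2N-2M$. Define
\begin{equation*}
\eta(t)=\bar c\, z^{-(N-M)}H(z),
\end{equation*}
so that $w\eta=z^{-N}G=g$. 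Then $\eta$ is a Laurent polynomial supported in degrees $-(N-M),\dots,N-M$. It remains to check that it has degree at most $N-M$ on both sides. For this I use $\deg H\le 2N-2M$, together with the fact that the lowest power of $z$ in $G$ is at least $0$ and $z=0$ is not among the divided roots. So $\eta$ is a trigonometric polynomial of degree at most $N-M$.

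Finally, I will show $\eta$ is real and strictly positive. Off the finite set $\{\theta_j\}$ we have $\eta=g/w$, which is real and nonnegative because $g\ge 0$ and $w>0$ there. By continuity $\eta$ is real-valued and $\ge 0$ everywhere, so it is a real trigonometric polynomial.

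Positivity at a point $t_0\ne\theta_j$ is immediate: $g(t_0)>0$, because the $\theta_j$ are all the zeros of $g$. Positivity at $\theta_j$ is the one step needing care, and the multiplicity count settles it. The order of vanishing of $g$ at $\theta_j$ equals the order of $w$, which is $2\nu_j$, plus the order of $\eta$. Since $g$ vanishes to order exactly $2\nu_j$ by definition, $\eta(\theta_j)\ne0$. Hence $\eta>0$ on $\mathbb{T}$.

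This is where the nonzero-ness of $g$ enters: $g\in\mathcal D_N$ has integral $1$. So $G\not\equiv0$, the orders are finite, and the factorization makes sense.
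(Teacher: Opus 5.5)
Your proposal is correct and matches the paper's proof step for step. Both write $w=c\,z^{-M}W(z)$ with $W(z)=\prod_j (z-e^{i\theta_j})^{2\nu_j}$, divide $G(z)=z^N g(t)$ by $W$, and set $\eta=z^{M-N}G/(cW)$. Both then get positivity away from the zeros from $g,w>0$, and at each $\theta_j$ from the fact that $g$ and $w$ vanish to the same exact even order. You add details the paper leaves implicit: the explicit unimodular constant $c$, the transfer of multiplicities through the local biholomorphism $t\mapsto e^{it}$, and the continuity argument showing that $\eta$ is real.
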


\begin{proof}
The polynomial $w$ has degree $M$ and exactly the same zeros, with the same
orders, as $g$. For a nonzero constant, 
\begin{equation*}
w(t)=Cz^{-M}\prod_{j=1}^{r}(z-e^{i\theta _{j}})^{2\nu _{j}}.
\end{equation*}%
If $G(z)=z^{N}g(t)$, then $G$ is divisible by 
\begin{equation*}
W(z)=\prod_{j=1}^{r}(z-e^{i\theta _{j}})^{2\nu _{j}}.
\end{equation*}%
Consequently, 
\begin{equation*}
\eta (t)=z^{M-N}\frac{G(z)}{C W(z)}
\end{equation*}%
is a real trigonometric polynomial of degree at most $N-M$. Away from the
zeros, $\eta =g/w>0$, as $g,w>0$. At a zero, the quotient has a removable
singularity and extends to a positive value because $g$ and $w$ have the
same exact even order there.
\end{proof}

\subsection{A two-sided variation argument}

\begin{lemma}
\label{lem:max-zeros} For the selected minimizer, $M=N$.
\end{lemma}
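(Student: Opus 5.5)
Suppose, to reach a contradiction, that $M<N$. Then $\eta$ in \eqref{eq:g-weta} is a strictly positive real trigonometric polynomial, and the space of real trigonometric polynomials $h$ of degree at most $N-M$ has dimension $2(N-M)+1\ge 3$. For any such $h$, the function $w h$ is a real trigonometric polynomial of degree at most $N$. Since $\eta$ is bounded below by a positive constant, $g_s=w(\eta+sh)$ is nonnegative for all $|s|$ small. To keep the normalization, I restrict to $h$ with $\int_{\mathbb{T}} w h=0$. This is one linear condition, so a subspace $V$ of dimension at least $2(N-M)\ge 2$ remains, and $g_s\in\mathcal{D}_N$ for $h\in V$ and $|s|\le s_0(h)$.

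The two-sided argument uses concavity (Lemma~\ref{lem:compact-concave}). The map $s\mapsto\mathscr L_\mu(g_s)$ is concave on $[-s_0,s_0]$ and has a minimum at the interior point $s=0$, because $g$ is a global minimizer. A concave function with an interior minimum is constant. Hence $\mathscr L_\mu(g_s)=\mathscr L_\mu(g)$ for all $|s|\le s_0$, so each $g_s$ is again a global minimizer.

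Now I push $s$ up to the boundary of positivity. Let
\[
s^*=\sup\{s>0:\ \eta+sh>0 \text{ on } \mathbb{T}\}.
\]
If $h$ takes a negative value somewhere, then $s^*<\infty$ and $\eta+s^*h\ge 0$ with at least one zero. I want the constancy to persist along the whole segment $[0,s^*]$. Concavity on the closed segment $[-s_0,s^*]$, together with the minimum being attained at the interior point $0$, again forces constancy on that entire segment, and continuity covers the endpoint. So $g_{s^*}$ is a global minimizer. It has all the zeros of $w$ with their multiplicities, plus at least one new zero of $\eta+s^*h$, which is nonnegative and so has even order there. Therefore $Z(g_{s^*})\ge M+1>Z(g)$, contradicting the maximal choice of $g$.

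It remains to guarantee that some $h\in V$ changes sign. Take any nonzero $h\in V$. If $h\ge 0$, then $\int wh=0$ with $w\ge 0$ and $w$ vanishing only at finitely many points forces $h\equiv 0$, which is impossible. So every nonzero $h\in V$ takes negative values, and replacing $h$ by $-h$ if necessary also gives positive values; either way $s^*<\infty$. The degenerate case $M=0$, $w\equiv 1$, is covered by the same argument.

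The main obstacle, though a mild one, is checking the endpoint carefully. We need $g_{s^*}\in\mathcal{D}_N$: the integral is preserved and nonnegativity holds. We need $\mathscr L_\mu$ to be concave along the full segment, which holds because $\mathcal{D}_N$ is convex and $\mathscr L_\mu$ is concave there. Finally, the zero count must strictly increase: new zeros of $\eta+s^*h$ contribute at least one to $Z$ even if they coincide with some $\theta_j$, since orders add. I would also note that $\deg(w(\eta+s^*h))\le M+(N-M)=N$.
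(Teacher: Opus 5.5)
Your proof is correct and follows the paper's argument. You perturb inside the face $w(\eta+sh)$ with $\int_{\mathbb{T}}wh=0$, use concavity of $\mathscr L_\mu$ to show the perturbed densities are still global minimizers, and push to the first touching point to raise $Z$; the paper does the same with the fixed direction $\psi=\cos t-\beta$ and the symmetric pair $g_\pm=w(\eta\pm\varepsilon_*\psi)$, $g=(g_++g_-)/2$, of which your one-sided push on $[-s_0,s^*]$ is an equivalent variant (and $\eta+s^*h\not\equiv 0$ follows from $\int_{\mathbb{T}}g_{s^*}=1$).
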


\begin{proof}
Suppose that $M<N$. Set 
\begin{equation*}
\beta =\frac{\int_{\mathbb{T}}w(t)\cos t\,dt}{\int_{\mathbb{T}}w(t)\,dt}%
,\qquad \psi (t)=\cos t-\beta .
\end{equation*}%
The function $w$ is continuous and positive on a nonempty open set. Its
weighted average of $\cos t$ therefore cannot equal either endpoint $\pm 1$.
Hence 
\begin{equation*}
-1<\beta <1,\qquad \psi \not\equiv 0,\qquad \int_{\mathbb{T}}w\psi =0.
\end{equation*}%
Also $\deg (w\psi )\leq M+1\leq N$. Since $\eta >0$, define 
\begin{equation*}
\varepsilon _{\ast }=\left( \max_{t\in \mathbb{T}}\frac{\lvert \psi
(t)\rvert }{\eta (t)}\right) ^{-1}>0,
\end{equation*}%
and 
\begin{equation*}
g_{\pm }=w(\eta \pm \varepsilon _{\ast }\psi ).
\end{equation*}%
The choice of $\varepsilon _{\ast }$ gives 
\begin{equation*}
g_{\pm }\geq 0,\qquad \deg g_{\pm }\leq N,\qquad \int_{\mathbb{T}}g_{\pm
}=1,\qquad g=\frac{g_{+}+g_{-}}{2}.
\end{equation*}%
Thus $g_{\pm }\in \mathcal{D}_{N}$. Let $m_{0}=\min_{\mathcal{D}_{N}}%
\mathscr
L_{\mu }$. Concavity and minimality give 
\begin{equation*}
m_{0}=\mathscr L_{\mu }(g)\geq \frac{\mathscr L_{\mu }(g_{+})+\mathscr %
L_{\mu }(g_{-})}{2}\geq m_{0}.
\end{equation*}%
Each of the two terms in the middle is at least $m_{0}$, so both are equal
to $m_{0}$. Hence $g_{+}$ and $g_{-}$ are also global minimizers. At a point
where $\lvert \psi \rvert /\eta $ is maximal, one of the factors $\eta \pm
\varepsilon _{\ast }\psi $ touches zero. The touching factor cannot vanish
on a nontrivial arc: analyticity would make it identically zero, but since $%
\psi$ has a zero on $\mathbb{T} $ and $\eta>0$ this cannot happen. It is
therefore a nonzero nonnegative real-analytic function with an isolated zero
of positive even order. This either creates a new zero of $g_{\pm }$ or
increases the order of an old zero of $w$. In both cases one of the two new
minimizers has larger $Z$ than $g$, contradicting the choice of $g$. Thus $%
M=N$ and the proof is concluded.
\end{proof}

\begin{remark}
The argument is not an iterative algorithm and does not require a canonical
extremizer. If one unused degree existed, a single introduction of a zero would
contradict maximality of the integer $Z(g)$.
\end{remark}

Since $M=N$, the remainder $\eta$ in \eqref{eq:g-weta} is a positive
constant. There is therefore a real amplitude 
\begin{equation}  \label{eq:q-product}
q(t) = \prod_{j=1}^{r} \left(2\sin\frac{t-\theta_j}{2}\right)^{\nu_j},
\qquad \sum_{j=1}^{r}\nu_j=N,
\end{equation}
such that 
\begin{equation}  \label{eq:g-q2}
g(t)=\frac{q(t)^2}{D}, \qquad D=\int_\mathbb{T }q(t)^2\,dt.
\end{equation}
Moreover, 
\begin{equation*}
e^{iNt/2}q(t)\in\mathcal{A}_N.
\end{equation*}
For odd $N$, the amplitude $q$ is antiperiodic. Its square, its logarithmic
derivative, and every product of two amplitudes used below are nevertheless $%
2\pi$-periodic. Repeated roots are always counted according to their
multiplicities.

\section{Geometry of the lower-level set}

\label{sec:5}

\subsection{Monotonicity between consecutive roots
}

Away from the roots of $q$, 
\begin{equation}  \label{eq:logderivative}
\frac{q^{\prime }(t)}{q(t)} = \frac12\sum_{j=1}^{r}\nu_j \cot\frac{t-\theta_j%
}{2},
\end{equation}
and hence 
\begin{equation}  \label{eq:logderivative-decreasing}
\left(\frac{q^{\prime }}q\right)^{\prime }(t) = -\frac14\sum_{j=1}^{r}\nu_j
\csc^2\frac{t-\theta_j}{2} <0.
\end{equation}

\begin{lemma}[One-hill lemma]
\label{lem:one-hill} Between two consecutive distinct roots of $q$, the
function $\lvert q\rvert$ increases strictly from zero to one maximum and
then decreases strictly to zero.
\end{lemma}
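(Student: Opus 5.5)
The natural route is through the logarithmic derivative \eqref{eq:logderivative} and its strict monotonicity \eqref{eq:logderivative-decreasing}. Fix two consecutive distinct roots $\theta_a<\theta_b$ of $q$, taken in the cyclic order on $\mathbb{T}$, and work on the open arc $J=(\theta_a,\theta_b)$. On $J$ the amplitude $q$ has no zeros, so it has constant sign and $\lvert q\rvert$ is smooth and positive. Moreover, $\lvert q\rvert'/\lvert q\rvert=q'/q$ there. It therefore suffices to show that $h=q'/q$ changes sign exactly once on $J$, from positive to negative. Then $\lvert q\rvert$ strictly increases up to the unique zero of $h$ and strictly decreases afterwards. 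At the endpoints $\lvert q\rvert$ tends to zero by continuity, since $q(\theta_a)=q(\theta_b)=0$.

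\textbf{Step 1 (strict decrease).} By \eqref{eq:logderivative-decreasing}, $h'<0$ on $J$, since every term $\nu_j\csc^2\frac{t-\theta_j}{2}$ is positive and $\nu_j\ge1$. Hence $h$ is strictly decreasing on $J$ and has at most one zero there.

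\textbf{Step 2 (boundary behaviour).} We examine $h$ near the endpoints using \eqref{eq:logderivative}. As $t\downarrow\theta_a$, the term $\frac12\nu_a\cot\frac{t-\theta_a}{2}$ tends to $+\infty$. Every other term stays bounded, because no other root $\theta_j$ lies in $[\theta_a,\theta_b)$ modulo $2\pi$, so $\frac{t-\theta_j}{2}$ stays away from $\pi\mathbb{Z}$. Hence $h\to+\infty$. Symmetrically, as $t\uparrow\theta_b$ we have $\frac{t-\theta_b}{2}\uparrow0$, so $\cot\to-\infty$ and $h\to-\infty$.

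\textbf{Step 3 (conclusion).} By continuity and strict monotonicity, $h$ has exactly one zero $t_0\in J$, with $h>0$ on $(\theta_a,t_0)$ and $h<0$ on $(t_0,\theta_b)$. This gives the one-hill shape, with $t_0$ the unique maximum.

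The main obstacle is bookkeeping rather than analysis. One must arrange the representatives of the $\theta_j$ so that the cotangent arguments do not cross a pole inside $J$. The case $r=1$ also needs care: there the "consecutive roots" coincide, and $J$ is the full circle minus one point, of length $2\pi$. Using $\cot$ with half-angles is $2\pi$-periodic in $t$, which handles both points. For odd $N$, $q$ is antiperiodic, but $\lvert q\rvert$ and $q'/q$ are periodic, so nothing changes. In the case $r=1$, Step 2 still applies: $h\to+\infty$ at $\theta_1^+$ and $h\to-\infty$ at $(\theta_1+2\pi)^-$.
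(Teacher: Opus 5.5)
Your proposal is correct and follows the same route as the paper: on the lifted root gap, the logarithmic derivative $q'/q$ is strictly decreasing by \eqref{eq:logderivative-decreasing}, tends to $+\infty$ at the left endpoint and to $-\infty$ at the right endpoint, and therefore vanishes exactly once. Your handling of the cyclic gap, including the case $r=1$, through the $2\pi$-periodicity of the half-angle cotangent is the same as the paper's device of lifting $\theta_1$ to $\theta_1+2\pi$.
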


\begin{proof}
Lift the root gap to $(\alpha ,\gamma )\subset \mathbb{R}$. The function $q$
has constant sign there and 
\begin{equation*}
\frac{d}{dt}\log \lvert q(t)\rvert =\frac{q^{\prime }(t)}{q(t)}.
\end{equation*}%
By \eqref{eq:logderivative-decreasing}, this logarithmic derivative is
strictly decreasing. The zero at $\alpha $ forces the limit $+\infty $ at
the left endpoint, while the zero at $\gamma $ forces the limit $-\infty $
at the right endpoint. Thus it vanishes exactly once. The cyclic gap is
treated by lifting the first root to $\theta _{1}+2\pi $.
\end{proof}

\subsection{Components and root blocks}

By the bathtub principle, the minimizing set for $q^2/D$ is a lower level
set. A nonconstant trigonometric polynomial has only finitely many points on
any fixed level. Therefore the distribution function of $q^2$ is continuous,
and there exists $\rho>0$ such that 
\begin{equation}  \label{eq:Fstar}
F^* = \{t\in\mathbb{T}:q(t)^2\leq\rho^2\}, \qquad \lvert F^*\rvert=\mu.
\end{equation}
The positivity of $\rho$ follows from $\mu>0$, since the zero set of $q$ is
finite.

The one-hill lemma gives a complete description in each root gap:

\begin{itemize}
\item If the hilltop is higher than $\rho$, the equation $q^2=\rho^2$ has
exactly two transverse solutions in the gap.

\item If the hilltop equals $\rho$, the entire gap belongs to $F^*$; the
hilltop is an internal point joining two low pieces.

\item If the hilltop is lower than $\rho$, the entire gap belongs to $F^*$.
\end{itemize}
Since $\mu <2\pi $, at least one gap has hilltop strictly above $\rho $.
Choose the cut point $x_{0}$ in the open superlevel set of a high root gap,
for instance at its hilltop, so that $q(x_{0})^2>\rho ^{2}$. Then no component
of $F^{\ast }$ is split by the cut. On the resulting lift, write 
\begin{equation}
F^{\ast }=\bigcup_{\ell =1}^{m}[u_{\ell },v_{\ell }]  \label{eq:F-components}
\end{equation}%
as a union of disjoint, correctly ordered intervals. Every component
contains a nonempty consecutive block $B_{\ell }$ of roots, with
multiplicity. Only the two outer endpoints move in the variational argument.
They are transverse and satisfy 
\begin{equation}
(q^{2})^{\prime }(u_{\ell })<0,\qquad (q^{2})^{\prime }(v_{\ell })>0\text{.}
\label{eq:endpoint-orientation}
\end{equation}%
Hilltops at the threshold are internal points and do not enter the endpoint
differentiation.

\section{Stationarity for the fixed lower-level set}

\label{sec:6}

Introduce the real amplitude space 
\begin{equation*}
\mathcal{V}_N = \left\{ h:\mathbb{R}\to\mathbb{R}: e^{iNt/2}h(t)\in\mathcal{A%
}_N \right\}.
\end{equation*}
Here and below, the condition $e^{iNt/2}h(t)\in\mathcal{A}_N$ means that the
function 
\begin{equation*}
t\longmapsto e^{iNt/2}h(t)
\end{equation*}
is the $2\pi$-periodic lift of an element of $\mathcal{A}_N$. Periodicity of
this lift gives 
\begin{equation*}
h(t+2\pi)=(-1)^N h(t).
\end{equation*}
Therefore, elements of $\mathcal{V}_N$ are periodic when $N$ is even and
antiperiodic when $N$ is odd. In either case, the product of two elements of 
$\mathcal{V}_N$ is $2\pi$-periodic. Consequently every product integrated
below defines an unambiguous function on $\mathbb{T}$.

For $0\neq h\in\mathcal{V}_N$, put 
\begin{equation*}
\mathscr Q_\mu(h) = \min_{\lvert A\rvert=\mu} \frac{\displaystyle\int_Ah^2} {%
\displaystyle\int_\mathbb{T }h^2}.
\end{equation*}
The minimum is over measurable sets $A\subset \mathbb{T}$ with $\lvert
A\rvert =\mu $. The denominator is positive because $h\neq 0$. Moreover, if 
\begin{equation*}
p_{h}(t)=e^{iNt/2}h(t),
\end{equation*}%
then $p_{h}\in \mathcal{A}_{N}$ and 
\begin{equation*}
\lvert p_{h}(t)\rvert ^{2}=h(t)^{2}.
\end{equation*}%
It follows from the description of $\mathcal{D}_{N}$ by normalized energy
densities that 
\begin{equation*}
g_{h}(t):=\frac{h(t)^{2}}{\displaystyle\int_{\mathbb{T}}h(t)^{2}dt}=\frac{%
\lvert p_{h}(t)\rvert ^{2}}{\displaystyle\int_{\mathbb{T}}\lvert
p_{h}(t)\rvert ^{2}dt}\in \mathcal{D}_{N}.
\end{equation*}%
This is the precise link between the real amplitude problem on $\mathcal{V}%
_{N}$ and the density problem on $\mathcal{D}_{N}$.

Let $F=F^{\ast }$. We now justify each part of the comparison that follows.
Since $\lvert F\rvert =\mu $, the set $F$ is one of the competitors in the
definition of $\mathscr Q_{\mu }(h)$. Hence 
\begin{equation*}
\frac{\displaystyle\int_{F}h^{2}}{\displaystyle\int_{\mathbb{T}}h^{2}}\geq %
\mathscr Q_{\mu }(h).
\end{equation*}%
The density $g_{h}$ belongs to $\mathcal{D}_{N}$, while $q^{2}/\int_{\mathbb{%
T}}q^{2}$ is a global minimizer of the lower-tail functional over $\mathcal{D%
}_{N}$. Therefore 
\begin{equation*}
\mathscr Q_{\mu }(h)\geq \mathscr Q_{\mu }(q).
\end{equation*}%
Finally, $F=F^{\ast }$ is a lower-level set selected by the bathtub
principle for $q^{2}$, so it attains the minimum defining $\mathscr Q_{\mu
}(q)$. Combining these three observations yields, for every $0\neq h\in 
\mathcal{V}_{N}$, 

\begin{equation}
\frac{\int_{F}h^{2}}{\int_{\mathbb{T}}h^{2}}\geq \mathscr Q_{\mu }(h)\geq %
\mathscr Q_{\mu }(q)=\frac{\int_{F}q^{2}}{\int_{\mathbb{T}}q^{2}}.
\label{eq:fixed-set-chain}
\end{equation}%
Thus $q$ minimizes not only the lower-tail problem, but also the Rayleigh
quotient for the now fixed set $F$. Write 
\begin{equation}
J=\int_{F}q^{2},\qquad D=\int_{\mathbb{T}}q^{2},\qquad \lambda =\frac{J}{D},
\label{eq:J-D-lambda}
\end{equation}%
and define the symmetric bilinear form 
\begin{equation}
B(h,k)=\int_{F}hk-\lambda \int_{\mathbb{T}}hk.  \label{eq:B-form}
\end{equation}%
Because $q\neq0$, we have $D>0$. Fix a real direction $h\in\mathcal{V}_N$
and, for real $s$, define 
\begin{equation*}
R_h(s) := \frac{\displaystyle\int_F(q+s h)^2} {%
\displaystyle\int_\mathbb{T}(q+s h)^2}.
\end{equation*}
The space $\mathcal{V}_N$ is a real vector space, so $q+s h\in%
\mathcal{V}_N$. The denominator of $R_h$ equals $D$ at $s=0$ and
depends continuously on $s$; hence it remains positive for all
sufficiently small $\lvert s\rvert$. Equation %
\eqref{eq:fixed-set-chain} shows that $R_h$ has a two-sided minimum at $%
\varepsilon=0$. Consequently $R_h^{\prime }(0)=0$.

For completeness, differentiating the numerator and denominator separately
gives 
\begin{equation*}
\left. \frac{d}{ds }\right\vert _{s
=0}\int_{F}(q+s h)^{2}=2\int_{F}qh,\qquad \left. \frac{d}{%
ds}\right\vert _{s =0}\int_{\mathbb{T}}(q+s
h)^{2}=2\int_{\mathbb{T}}qh.
\end{equation*}%
The quotient rule and $J=\lambda D$ therefore yield 
\begin{align*}
R_{h}^{\prime }(0)& =\frac{2D\displaystyle\int_{F}qh-2J\displaystyle\int_{%
\mathbb{T}}qh}{D^{2}} \\
& =\frac{2}{D}\left( \int_{F}qh-\lambda \int_{\mathbb{T}}qh\right) =\frac{2}{%
D}B(q,h).
\end{align*}%
Since $D>0$ and $R_{h}^{\prime }(0)=0$, we obtain

\begin{equation}
B(q,h)=0\qquad \text{for every }h\in \mathcal{V}_{N}.
\label{eq:stationarity}
\end{equation}%
This is stationarity on the real vector space $\mathcal{V}_{N}$. No
moving-set differentiation is involved here: throughout this calculation the
set $F$ is fixed, and only the amplitude is varied along the affine line $%
q+s h$.

\begin{remark}
Equation \eqref{eq:fixed-set-chain} is the bridge between the set-dependent
lower-tail problem and ordinary finite-dimensional stationarity. Later the computation of the 
Hessian  uses \eqref{eq:stationarity} repeatedly.
\end{remark}

\section{Relative translations of the root blocks}

\label{sec:7}

For $0\neq h\in\mathcal{V}_N$, put 
\begin{equation*}
\mathscr Q_\mu(h) = \min_{\lvert A\rvert=\mu} \frac{\displaystyle\int_Ah^2} {%
\displaystyle\int_\mathbb{T }h^2}.
\end{equation*}
The minimum is over measurable sets $A\subset\mathbb{T}$ with $\lvert
A\rvert=\mu$. The denominator is positive because $h\neq0$. Moreover, if 
\begin{equation*}
p_h(t)=e^{iNt/2}h(t),
\end{equation*}
then $p_h\in\mathcal{A}_N$ and 
\begin{equation*}
\lvert p_h(t)\rvert^2=h(t)^2.
\end{equation*}
It follows from the description of $\mathcal{D}_N$ by normalized energy
densities that 
\begin{equation*}
g_h(t) := \frac{h(t)^2}{\displaystyle\int_\mathbb{T }h^2} = \frac{\lvert
p_h(t)\rvert^2} {\displaystyle\int_\mathbb{T}\lvert p_h\rvert^2} \in\mathcal{%
D}_N.
\end{equation*}
This is the precise link between the real amplitude problem on $\mathcal{V}%
_N $ and the density problem on $\mathcal{D}_N$.

\begin{itemize}
\item 
We will now assume for contradiction that $m \geq 2$ in \eqref{eq:F-components}. This allows at least two rootblocks to be translated relative to one another. We retain this assumption throughout the following sections and eventually show, in \eqref{neg:def}, that every nonconstant relative block translation has strictly negative second variation. This contradicts the local minimality established in \eqref{eq:R-local-min} and therefore proves that $m = 1$.
\end{itemize}

Because $\rho>0$, no root of $q$ lies on the boundary of $F^*$. Every
distinct root therefore belongs to the interior of exactly one component of $%
F^*$. For $1\leq\ell\leq m$, let 
\begin{equation*}
I_\ell = \{j\in\{1,\ldots,r\}:\theta_j\in B_\ell\}.
\end{equation*}
Then $I_1,\ldots,I_m$ partitions $\{1,\ldots,r\}$, and each $I_\ell$ is
nonempty and consecutive in the fixed lift chosen in Section~\ref{sec:5}.
The multiplicity of the distinct root $\theta_j$ remains the exponent $\nu_j$%
; it is not included a second time in the index set.

Factor $q $ according to the component blocks: 
\begin{equation}
Q_{\ell }(t)=\prod_{\theta _{j}\in B_{\ell }}\left( 2\sin \frac{t-\theta _{j}%
}{2}\right) ^{\nu _{j}},\qquad q(t)=C\prod_{\ell =1}^{m}Q_{\ell }(t).
\label{eq:block-factorization}
\end{equation}%
Translate the blocks independently: 
\begin{equation}
q_{c}(t)=C\prod_{\ell =1}^{m}Q_{\ell }(t-c_{\ell }),\qquad c=(c_{1},\ldots
,c_{m})\in \mathbb{R}^{m}.  \label{eq:block-translation}
\end{equation}%
We verify explicitly that these perturbations remain in the amplitude space.
For $a\in\mathbb{R}$, writing $z=e^{it}$, one has 
\begin{equation*}
e^{it/2}\,2\sin\frac{t-a}{2} = -i e^{-ia/2}\bigl(z-e^{ia}\bigr).
\end{equation*}
Apply this identity to $a=\theta_j+c_\ell$ for $j\in I_\ell$. Since $%
\sum_{j=1}^{r}\nu_j=N$, multiplication of all the factors gives 
\begin{equation*}
e^{iNt/2}q_c(t)\in\mathcal{A}_N.
\end{equation*}
Thus $q_c\in\mathcal{V}_N$ for every $c\in\mathbb{R}^m$. The product is not
identically zero, and hence 
\begin{equation*}
\int_\mathbb{T }q_c(t)^2\,dt>0.
\end{equation*}
For odd $N$, the amplitude $q_c$ is antiperiodic, while $q_c^2$ is $2\pi$%
-periodic. All quotients and circle integrals below are therefore well
defined.


We work throughout on the fixed real lift obtained from the cut in Section~%
\ref{sec:5}; every endpoint below refers to this lift. At every outer
endpoint $e$, define 
\begin{equation*}
\sigma _{e}=\frac{q(e)}{\rho }\in \{-1,1\}.
\end{equation*}%
Since $\rho >0$ and $(q^{2})^{\prime }(e)\neq 0$, we also have $q^{\prime
}(e)\neq 0$. For later reference, the endpoint orientations imply 
\begin{equation*}
\sigma _{u_{\ell }}q^{\prime }(u_{\ell })<0,\qquad \sigma _{v_{\ell
}}q^{\prime }(v_{\ell })>0.
\end{equation*}%
For an outer endpoint $e$, set 
\begin{equation*}
H_{e}(x,c,\tau )=q_{c}(x)-\sigma _{e}\tau .
\end{equation*}%
This is a real analytic function satisfying 
\begin{equation*}
H_{e}(e,0,\rho )=0,\qquad \text{and}\qquad \partial _{x}H_{e}(e,0,\rho
)=q^{\prime }(e)\neq 0.
\end{equation*}%
The analytic implicit-function theorem therefore supplies a unique
real-analytic endpoint branch $e(c,\tau )$ satisfying 
\begin{equation}
e(0,\rho )=e,\qquad \text{and}\qquad H_{e}(e(c,\tau ),c,\tau )=0.
\label{eq:endpoint}
\end{equation}%
There are only finitely many outer endpoints, so we may intersect the
resulting neighborhoods and work on one common neighborhood of $(0,\rho )$
on which all functions $u_{\ell }(c,\tau )$ and $v_{\ell }(c,\tau )$ are
defined. In particular, all their second derivatives exist. Differentiating
the two signed crossing equations \eqref{eq:endpoint} with respect to $\tau $
at $(0,\rho )$ and using the definition of $H_{e}$ gives 
\begin{equation*}
q^{\prime }(u_{\ell })\,\partial _{\tau }u_{\ell }(0,\rho )=\sigma _{u_{\ell
}},\qquad q^{\prime }(v_{\ell })\,\partial _{\tau }v_{\ell }(0,\rho )=\sigma
_{v_{\ell }}.
\end{equation*}%
Using the endpoint orientations above, this becomes

\begin{equation*}
\partial _{\tau }u_{\ell }(0,\rho )=-\frac{1}{\lvert q^{\prime }(u_{\ell
})\rvert },\qquad \partial _{\tau }v_{\ell }(0,\rho )=\frac{1}{\lvert
q^{\prime }(v_{\ell })\rvert }.
\end{equation*}%
Set 
\begin{equation*}
\Phi (c,\tau )=\sum_{\ell =1}^{m}\left( v_{\ell }(c,\tau )-u_{\ell }(c,\tau
)\right) \text{.}
\end{equation*}%
Then 
\begin{equation*}
\partial _{\tau }\Phi (0,\rho )=\sum_{e}\frac{1}{\lvert q^{\prime }(e)\rvert 
}>0\text{.}
\end{equation*}
The analytic implicit-function theorem now gives a neighborhood $U\subset%
\mathbb{R}^m$ of $0$, a neighborhood $I\subset\mathbb{R}$ of $\rho$, and a
unique analytic function 
\begin{equation*}
\rho(\,\cdot\,):U\longrightarrow I
\end{equation*}
such that 
\begin{equation*}
\rho(0)=\rho, \qquad \text{and}\qquad\Phi(c,\rho(c))=\mu \quad(c\in U).
\end{equation*}
The uniqueness here is local, near the original level $\rho$. After
shrinking $U$, we may assume that $\rho(c)>0$, that all endpoints remain in
the fixed lift and in the same strict cyclic order, and that the endpoint
intervals remain pairwise disjoint.

Define 
\begin{equation*}
G_{c}=\bigcup_{\ell =1}^{m}[u_{\ell }(c,\rho (c)),v_{\ell }(c,\rho (c))],
\end{equation*}%
and 
\begin{equation}
\mathcal{R}(c)=\frac{\int_{G_{c}}q_{c}^{2}}{\int_{\mathbb{T}}q_{c}^{2}}.
\label{eq:moving-rayleigh}
\end{equation}
Because the intervals are disjoint on the fixed lift, 
\begin{equation*}
\lvert G_c\rvert = \sum_{\ell=1}^{m} \bigl(v_\ell(c,\rho(c))-u_\ell(c,%
\rho(c))\bigr) = \Phi(c,\rho(c)) = \mu.
\end{equation*}
The initial conditions for the endpoint branches and $\rho(0)=\rho$ give $%
G_0=F^*$, up to boundary points, which do not affect any integral. The
numerator can be written as a finite sum of integrals over analytic
endpoints, 
\begin{equation}  \label{eq:expl-integral}
\int_{G_c}q_c^2 = \sum_{\ell=1}^{m}
\int_{u_\ell(c,\rho(c))}^{v_\ell(c,\rho(c))}q_c(t)^2\,dt.
\end{equation}
Together with the positive analytic denominator, this shows that $\mathcal{R}
$ is real analytic on a possibly smaller neighborhood of $0$. In particular,
the first and second derivatives used in Section~\ref{sec:8} exist.

Indeed, admissibility of $G_c$ gives 
\begin{equation*}
\mathcal{R}(c) \geq \min_{\substack{ A\subset\mathbb{T}\ \mathrm{measurable} 
\\ \lvert A\rvert=\mu}} \frac{\displaystyle\int_Aq_c^2} {\displaystyle\int_%
\mathbb{T }q_c^2} = \mathscr Q_\mu(q_c).
\end{equation*}
Since $q_c\in\mathcal{V}_N\setminus\{0\}$ and $q$ is a global minimizer of $%
\mathscr Q_\mu$ on this space, 
\begin{equation*}
\mathscr Q_\mu(q_c)\geq\mathscr Q_\mu(q).
\end{equation*}
Finally, $G_0=F^*$, and $F^*$ is a minimizing lower-level set for $q^2$, so $%
\mathscr Q_\mu(q)=\mathcal{R}(0)$. We have therefore proved, for every
sufficiently small $c$, that

\begin{equation}
\mathcal{R}(c)\geq \mathscr Q_{\mu }(q_{c})\geq \mathscr Q_{\mu }(q)=%
\mathcal{R}(0).  \label{eq:R-local-min}
\end{equation}%
Hence $c=0$ is a local minimum of $\mathcal{R}$.

\section{Computing the Hessian
}

\label{sec:8}

\subsection{Endpoint velocities}

At an outer endpoint $e$, set 
\begin{equation*}
\kappa _{e}=\frac{1}{\lvert q^{\prime }(e)\rvert }.
\end{equation*}%
For a tangent amplitude $h\in \mathcal{V}_{N}$, define 
\begin{equation*}
z_{h}(e)=\sigma _{e}h(e),\qquad \text{and}\qquad \overline{z}_{h}=\frac{%
\sum_{e}\kappa _{e}z_{h}(e)}{\sum_{e}\kappa _{e}}.
\end{equation*}%
%
%
%
Choose a $C^{1}$ path $s\mapsto q_{s}\in \mathcal{V}_{N}$ with 
\begin{equation*}
q_{0}=q,\qquad \text{and}\qquad \left. \frac{d}{ds}q_{s}\right\vert _{s=0}=h,
\end{equation*}%
and continue the signed endpoints and common level by the implicit-function
construction of Section~\ref{sec:7}. We write $u_{\ell }^{\prime }[h]$, $%
v_{\ell }^{\prime }[h]$, and $\rho ^{\prime }[h]$ for the resulting first
derivatives at $s=0$. Their values depend only on $h$.

\begin{lemma}[Endpoint velocities]
\label{lem:endpoint-velocities} For $h\neq0$, the first derivatives of the
endpoints $u_l[h]$ and $v_l[h] $ are given by 
\begin{equation}  \label{eq:endpoint-velocity}
u_\ell^{\prime }[h] = \kappa_{u_\ell} \bigl(z_h(u_\ell)-\overline z_h\bigr),
\qquad v_\ell^{\prime }[h] = \kappa_{v_\ell} \bigl(\overline z_h-z_h(v_\ell)%
\bigr).
\end{equation}
\end{lemma}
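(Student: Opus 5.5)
We differentiate the defining equations of the endpoints and of the common level along the path $s\mapsto q_s$. We then solve the resulting linear system.

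\textbf{Step 1: the endpoint equations.} By \eqref{eq:endpoint}, each outer endpoint $e(s)$ satisfies the signed crossing equation
\begin{equation*}
q_s(e(s))=\sigma_e\,\rho(s),
\end{equation*}
where $\rho(s)$ is the common level fixed by the measure constraint. The analytic implicit-function theorem of Section~\ref{sec:7} applies verbatim to the map $(x,s,\tau)\mapsto q_s(x)-\sigma_e\tau$, since $\partial_x$ at $(e,0,\rho)$ equals $q'(e)\neq0$. So $e(s)$ and $\rho(s)$ are $C^1$, and their derivatives at $s=0$ depend only on $h=\dot q_0$. Differentiating at $s=0$ gives
\begin{equation*}
h(e)+q'(e)\,e'[h]=\sigma_e\,\rho'[h].
\end{equation*}
We multiply by $\sigma_e$ and use $\sigma_e^2=1$. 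This gives $\sigma_e q'(e)\,e'[h]=\rho'[h]-z_h(e)$.

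\textbf{Step 2: insert the orientations.} The orientation relations recorded in Section~\ref{sec:7} are $\sigma_{u_\ell}q'(u_\ell)=-\lvert q'(u_\ell)\rvert$ and $\sigma_{v_\ell}q'(v_\ell)=\lvert q'(v_\ell)\rvert$. Substituting them yields
\begin{equation*}
u_\ell'[h]=\kappa_{u_\ell}\bigl(z_h(u_\ell)-\rho'[h]\bigr),\qquad v_\ell'[h]=\kappa_{v_\ell}\bigl(\rho'[h]-z_h(v_\ell)\bigr).
\end{equation*}

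\textbf{Step 3: determine $\rho'[h]$ from the measure constraint.} The level is chosen so that $\sum_\ell(v_\ell(s)-u_\ell(s))=\mu$ for all small $s$. Differentiating gives $\sum_\ell(v_\ell'[h]-u_\ell'[h])=0$. Substituting Step 2, this reads
\begin{equation*}
\rho'[h]\sum_e\kappa_e-\sum_e\kappa_e z_h(e)=0.
\end{equation*}
Hence $\rho'[h]=\overline z_h$. Inserting this into Step 2 gives \eqref{eq:endpoint-velocity}.

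The only delicate point is Step 1. We must justify that the construction of Section~\ref{sec:7}, which was written for block translations $q_c$, extends to a general $C^1$ path in $\mathcal{V}_N$. We also need that the derivatives are independent of the chosen path. Both follow from the chain rule: the endpoint and level functions are $C^1$ in the amplitude, which ranges over the finite-dimensional space $\mathcal{V}_N$, and evaluation at a point is linear in the amplitude. The rest is a two-line computation. The hypothesis $h\neq0$ plays no role beyond notation.
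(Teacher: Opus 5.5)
Your proposal is correct and follows the paper's proof essentially step for step: differentiate the signed crossing equations $q_s(e(s))=\sigma_e\rho(s)$, insert the orientation signs $\sigma_e q'(e)=\mp\lvert q'(e)\rvert$ at $u_\ell$ and $v_\ell$, and then obtain $\rho'[h]=\overline z_h$ by differentiating the length constraint. Your remark on path-independence, via the implicit-function theorem on the finite-dimensional space $\mathcal{V}_N$, only makes explicit what the paper states in passing (``their values depend only on $h$''); for a $C^1$ path this is the $C^1$ rather than the analytic implicit-function theorem, which is harmless.
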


\begin{proof}
Differentiate the two signed crossing equations in \eqref{eq:endpoint}.
Since 
\begin{equation*}
\sigma_{u_\ell}q^{\prime }(u_\ell)=-\lvert q^{\prime }(u_\ell)\rvert, \qquad
\sigma_{v_\ell}q^{\prime }(v_\ell)=\lvert q^{\prime }(v_\ell)\rvert,
\end{equation*}
we obtain 
\begin{equation*}
u_\ell^{\prime }[h] = \kappa_{u_\ell} \bigl(z_h(u_\ell)-\rho^{\prime }[h]%
\bigr),
\end{equation*}
\begin{equation*}
v_\ell^{\prime }[h] = \kappa_{v_\ell} \bigl(\rho^{\prime }[h]-z_h(v_\ell)%
\bigr).
\end{equation*}
Differentiating $\sum_\ell(v_\ell[h]-u_\ell[h])=\mu$ gives 
\begin{equation*}
0 = \sum_{\ell=1}^{m} \bigl(v_\ell^{\prime }[h]-u_\ell^{\prime }[h]\bigr) =
\rho^{\prime }[h]\sum_e\kappa_e - \sum_e\kappa_ez_h(e).
\end{equation*}
The weights are strictly positive, so 
\begin{equation*}
\rho^{\prime }[h] = \frac{\displaystyle\sum_e\kappa_ez_h(e)} {\displaystyle%
\sum_e\kappa_e} = \overline z_h.
\end{equation*}
Substitution gives \eqref{eq:endpoint-velocity}.
\end{proof}


Define the weighted boundary covariance 
\begin{equation}  \label{eq:Gamma}
\Gamma(h,k) = \sum_e\kappa_e \bigl(z_h(e)-\overline z_h\bigr) \bigl(%
z_k(e)-\overline z_k\bigr).
\end{equation}
If $K=\sum_e\kappa_e$, then 
\begin{equation*}
\Gamma(h,k) = \sum_e\kappa_ez_h(e)z_k(e) - \frac{ \left(\sum_e\kappa_ez_h(e)%
\right) \left(\sum_e\kappa_ez_k(e)\right)} {K}.
\end{equation*}
Thus the common-level constraint contributes one global rank-one correction
involving all endpoints. In particular, when $m\geq3$, all cross-component
terms are already contained in $\Gamma$; no blockwise independence is
assumed. The form is symmetric and positive semidefinite, since 
\begin{equation*}
\Gamma(h,h) = \sum_e\kappa_e \bigl(z_h(e)-\overline z_h\bigr)^2 \geq0.
\end{equation*}
We shall also use 
\begin{equation*}
\sum_e\kappa_e \bigl(z_h(e)-\overline z_h\bigr)=0.
\end{equation*}

\subsection{Cancellation of the boundary term 
}

Let $c\mapsto q_c$ be a $C^2$ family in $\mathcal{V}_N$ equipped with the
fixed-length common-level endpoint branch. Write 
\begin{equation*}
H_\ell=\left.\partial_\ell q_c\right|_{c=0}, \qquad
S_{\ell k}=\left.\partial_\ell\partial_kq_c\right|_{c=0},\qquad \ell,k=1,...,m.
\end{equation*}
Let 
\begin{equation*}
J(c)=\int_{G_c}q_c^2, \qquad D(c)=\int_\mathbb{T }q_c^2.
\end{equation*}

\begin{proposition}[The Hessian of the quotient]
\label{prop:general-hessian} At $c=0$, 
\begin{equation}  \label{eq:first-boundary-cancel}
\partial_\ell J(0) = 2\int_{F}q\, H_\ell.
\end{equation}
and 
\begin{equation}  \label{eq:general-hessian}
\partial_\ell\partial_k\mathcal{R}(0) = \frac{2}{D(0)} \left[ B(H_\ell,H_k) -
\rho\Gamma(H_\ell,H_k) + B(q,S_{\ell k}) \right].
\end{equation}
\end{proposition}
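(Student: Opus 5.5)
The plan is to differentiate $J(c)=\sum_{\ell}\int_{u_\ell(c,\rho(c))}^{v_\ell(c,\rho(c))}q_c^2$ directly from \eqref{eq:expl-integral}, using the Leibniz rule. The key observation, which I would establish first and for \emph{every} $c$ in the neighborhood $U$ (not only at $c=0$), is that the boundary contribution to the first derivative vanishes identically. By \eqref{eq:endpoint} with $\tau=\rho(c)$, each moving endpoint $e(c)$ satisfies $q_c(e(c))=\sigma_e\rho(c)$, hence $q_c(e(c))^2=\rho(c)^2$ at all outer endpoints simultaneously. Therefore
\begin{equation*}
\partial_k J(c)=2\int_{G_c}q_c\,\partial_k q_c+\rho(c)^2\,\partial_k\Bigl(\sum_\ell (v_\ell-u_\ell)\Bigr)=2\int_{G_c}q_c\,\partial_k q_c ,
\end{equation*}
because $\Phi(c,\rho(c))=\mu$ is constant. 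Evaluating at $c=0$ gives \eqref{eq:first-boundary-cancel}. In the same way, $\partial_k D(c)=2\int_{\mathbb{T}}q_c\partial_kq_c$.

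Next I would differentiate the identity for $\partial_kJ(c)$ once more in $c_\ell$ at $c=0$. The interior part gives $2\int_F(H_\ell H_k+qS_{\ell k})$. The boundary part is $2\sum_\ell\bigl[q H_k\,v'_\ell-qH_k\,u'_\ell\bigr]$ evaluated at the endpoints, with endpoint velocities taken along the path $s\mapsto q_{se_\ell}$, whose tangent is $H_\ell$. At an outer endpoint $e$ we have $q(e)=\sigma_e\rho$, so $q(e)H_k(e)=\rho\,z_{H_k}(e)$. Inserting Lemma~\ref{lem:endpoint-velocities} with $h=H_\ell$, the boundary sum becomes
\begin{equation*}
-2\rho\sum_e\kappa_e\,z_{H_k}(e)\bigl(z_{H_\ell}(e)-\overline z_{H_\ell}\bigr).
\end{equation*}
The sign flip between $u$- and $v$-endpoints is exactly absorbed by the two formulas in \eqref{eq:endpoint-velocity}. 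Using $\sum_e\kappa_e(z_{H_\ell}(e)-\overline z_{H_\ell})=0$, I can subtract $\overline z_{H_k}$ in the first factor, which turns this sum into $-2\rho\,\Gamma(H_\ell,H_k)$. Hence
\begin{equation*}
\partial_\ell\partial_kJ(0)=2\int_F(H_\ell H_k+qS_{\ell k})-2\rho\Gamma(H_\ell,H_k),
\end{equation*}
while $\partial_\ell\partial_kD(0)=2\int_{\mathbb{T}}(H_\ell H_k+qS_{\ell k})$.

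Finally I would expand the second derivative of the quotient $\mathcal R=J/D$:
\begin{equation*}
\partial_\ell\partial_k\mathcal R=\frac{\partial_\ell\partial_kJ}{D}-\frac{\partial_\ell J\,\partial_kD+\partial_kJ\,\partial_\ell D}{D^2}-\frac{J\,\partial_\ell\partial_kD}{D^2}+\frac{2J\,\partial_\ell D\,\partial_kD}{D^3}.
\end{equation*}
Stationarity \eqref{eq:stationarity} with $h=H_\ell$ gives $\partial_\ell J(0)=\lambda\,\partial_\ell D(0)$, and $J=\lambda D$. With these, the two middle groups of first-order terms cancel: $-2\lambda\partial D\partial D/D+2\lambda\partial D\partial D/D=0$. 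What remains is $(\partial_\ell\partial_kJ-\lambda\,\partial_\ell\partial_kD)/D$, which by the formulas above and the definition \eqref{eq:B-form} of $B$ is exactly \eqref{eq:general-hessian}.

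The step I expect to need the most care is the first one: justifying that the boundary term vanishes for all $c$ near $0$, rather than merely at $c=0$. Only this makes the second derivative of $J$ free of endpoint accelerations, which never appear. It relies on the signed crossing equations holding along the whole common-level branch $\rho(c)$, and on all endpoints staying transverse and disjoint in the fixed lift on $U$, both arranged in Section~\ref{sec:7}. A secondary point is to check that the velocities in Lemma~\ref{lem:endpoint-velocities}, computed along the path $q_{se_\ell}$, coincide with $\partial_\ell u_\ell$ and $\partial_\ell v_\ell$ of the construction of Section~\ref{sec:7}. This follows from the local uniqueness in the implicit-function theorem.
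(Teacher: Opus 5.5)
Your proposal is correct and is essentially the paper's proof. You show that the boundary term of $\partial_\ell J(c)$ vanishes for all small $c$, because $q_c^2=\rho(c)^2$ at every endpoint and $\lvert G_c\rvert\equiv\mu$; you then differentiate once more using only the endpoint velocities of Lemma~\ref{lem:endpoint-velocities} to get the $-2\rho\Gamma(H_\ell,H_k)$ term, and stationarity removes the first-order terms of the quotient. The differences are cosmetic: you expand the quotient rule and apply \eqref{eq:stationarity} with $h=H_\ell$, whereas the paper differentiates $J=\mathcal{R}D$ using $\partial_\ell\mathcal{R}(0)=0$, and your sign bookkeeping in the boundary sum is cleaner than the paper's intermediate display.
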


\begin{remark}[Why no endpoint accelerations occur]
\label{rem:no-acceleration} Because \eqref{eq:first-boundary-cancel} is an
exact identity along the whole branch, differentiating it once more requires
only 
endpoint velocities. Endpoint accelerations have not been discarded. If the
unsimplified transport formula is differentiated directly, they occur in the
combination $\rho(c)^2\partial_{\ell k}\lvert G_c\rvert$, which vanishes
identically.
\end{remark}

\begin{proof}
Here $q_0=q$, $G_0=F$, and $\rho(0)=\rho$. All endpoint quantities occurring
in $\Gamma$ are evaluated at $c=0$. Section~\ref{sec:7} shows that the
endpoint and common-level branches are real analytic. Hence $J$, $D$, and $%
\mathcal{R}=J/D $ are $C^2$, and $D(c)>0$ for all sufficiently small $c$.

Applying the explicit formula \eqref{eq:expl-integral} and the one-variable
Leibniz rule on every component gives 
\begin{align}
\partial_\ell J(c) &= 2\int_{G_c}q_c\,\partial_\ell q_c + \sum_{\ell=k}^{m} \left[
q_c\bigl(v_k(c)\bigr)^2\partial_\ell v_k(c) - q_c\bigl(u_k(c)\bigr)%
^2\partial_\ell u_k(c) \right]  \notag \\
&= 2\int_{G_c}q_c\,\partial_\ell q_c + \rho(c)^2 \sum_{k=1}^{m} \bigl(%
\partial_\ell v_k(c)-\partial_\ell u_k(c)\bigr)  \notag \\
&=2\int_{G_c}q_c\partial_\ell q_c+\rho(c)^2\partial_\ell |G_c|=2\int_{G_c}q_c%
\partial_\ell q_c.  \label{eq:first-boundary-cancel-2}
\end{align}
The last equality uses $\lvert G_c\rvert\equiv\mu$. Evaluating this equation
at $c=0$ then establishes \eqref{eq:first-boundary-cancel}.

{\ Differentiating \eqref{eq:first-boundary-cancel-2} again yields} 
\begin{align*}
\partial_\ell\partial_k
J(c)&=2\partial_k\int_{G_c}q_c\partial_\ell q_c=2\int_{G_c}\partial_\ell
q_c\partial_k q_c+2\int_{G_c}q_c\partial_\ell\partial_k q_c \\
&\qquad\qquad+2 \sum_{r=1}^{m} \left[ q_c\bigl(v_r(c)\bigr)%
\partial_\ell q_c\bigl(v_r(c)\bigr)\partial_k v_r(c) - q_c\bigl(u_r(c)%
\bigr)\partial_\ell q_c\bigl(v_r(c)\bigr)\partial_k u_r(c) \right].
\end{align*}
Evaluation at zero gives 
\begin{align*}
\partial_\ell \partial_k J(0)&=2\int_FH_\ell H_k+2\int_FqS_{\ell k}+
2\rho\sum_{r=1}^{m} \left[ \sigma_{v_r}H_i v_r^{\prime }[H_k] -
\sigma_{u_r}H_\ell u_r^{\prime }[H_k] \right] \\
&=2\int_FH_\ell H_k+2\int_FqS_{\ell k}+ 2\rho\sum_{r=1}^{m} \left[
z_{H_\ell}(v_r)v_r^{\prime }[H_k] - z_{H_\ell}(u_r)u_r^{\prime }[H_k] %
\right].
\end{align*}
Using Lemma~\ref{lem:endpoint-velocities} and $\sum_e\kappa_e \bigl(%
z_{H_k}(e)-\overline z_{H_k}\bigr)=0 $, the last summand becomes 
\begin{align*}
2\rho\sum_{r=1}^{m} \big[ z_{H_\ell}(v_r)v_r^{\prime }[H_k] &-
z_{H_\ell}(u_r)u_r^{\prime }[H_k] \big] = -2\rho\sum_e\kappa_ez_{H_\ell}(e) %
\bigl(z_{H_k}(e)-\overline z_{H_k}\bigr) \\
& =2 \sum_e\kappa_e \bigl(z_{H_\ell}(e)-\overline z_{H_\ell}\bigr) \bigl(%
z_{H_k}(e)-\overline z_{H_j}\bigr) + 2\overline z_{H_\ell} \sum_e\kappa_e \bigl(%
z_{H_k}(e)-\overline z_{H_k}\bigr) \\
& =2\Gamma(H_\ell,H_k).
\end{align*}
Summing up, we derived 
\begin{equation}  \label{eq:numerator-second}
\partial_\ell \partial_kJ(0) = 2\int_FH_\ell H_k + 2\int_FqS_{\ell k} -
2\rho\Gamma(H_\ell,H_k).
\end{equation}
Similarly, the denominator satisfies 
\begin{equation*}
\partial_\ell D(0)=2\int_\mathbb{T }qH_\ell\qquad\text{and}\qquad\partial_\ell%
\partial_kD(0) = 2\int_\mathbb{T }H_\ell H_k + 2\int_\mathbb{T }qS_{\ell k}.
\end{equation*}
At the base point, $\mathcal{R}(0)=J(0)/D(0)=\lambda$, and 
\begin{equation*}
\partial_\ell J(0)=2\int_FqH_\ell, \qquad \partial_\ell D(0)=2\int_\mathbb{T }qH_\ell.
\end{equation*}
Stationarity at $c=0$ gives $\partial_i\mathcal{R}(0)=0$ and therefore 
\begin{equation*}
\partial_\ell J(0)-\lambda\partial_\ell D(0) = 2B(q,H_\ell) = 0.
\end{equation*}
Using the product rule to differentiate $J=\mathcal{R }D$ finally shows 
\begin{equation*}
\partial_i\partial_k J(c) = D(c)\,\partial_\ell\partial_k\mathcal{R } (c)+
\partial_kD(c)\,\partial_\ell\mathcal{R}(c) + \partial_\ell D(c)\,\partial_k%
\mathcal{R}(c ) + \mathcal{R}(c)\,\partial_\ell\partial_k D.
\end{equation*}
At $c=0$, both first derivatives of $\mathcal{R}$ vanish and $\mathcal{R}%
(0)=\lambda$. Therefore 
\begin{equation*}
\partial_\ell\partial_k\mathcal{R}(0) = \frac{ \partial_\ell\partial_kJ(0) -
\lambda\partial_\ell\partial_kD(0)} {D(0)}.
\end{equation*}
Substitution gives \eqref{eq:general-hessian}.
\end{proof}

\subsection{Derivatives in the translation-invariant direction
}

For the block factor $Q_\ell$, write 
\begin{equation*}
A_\ell(t)=\frac{Q_\ell^{\prime }(t)}{Q_\ell(t)}
\end{equation*}
away from its roots. For the family \eqref{eq:block-translation}, we write
\begin{equation}  \label{eq:block-derivatives}
H_\ell=-qA_\ell, \qquad S_{\ell k}=qA_\ell A_k\quad(\ell\neq k), \qquad
H_\ell H_k=qS_{\ell k}.
\end{equation}
The apparent poles are removable in the actual products: 
\begin{equation*}
H_\ell = -CQ_\ell^{\prime }\prod_{j\neq\ell}Q_j,
\end{equation*}
\begin{equation*}
S_{\ell k} = CQ_\ell^{\prime }Q_k^{\prime } \prod_{j\neq\ell,k}Q_j.
\end{equation*}
Thus $H_\ell,S_{\ell k}\in\mathcal{V}_N$. Stationarity gives 
\begin{equation}  \label{eq:stationarity2}
B(q,S_{\ell k})=0, \qquad B(H_\ell,H_k)=B(q,S_{\ell k})=0
\quad(\ell\neq k),
\end{equation}
where for the second identity we used the definition of $B$ and the third
identity of \eqref{eq:block-derivatives}.
Moreover, set 
\begin{equation*}
\mathsf{H}=\left( \partial _{\ell }\partial _{k}\mathcal{R}(0)\right) _{\ell
,k},\qquad \boldsymbol{\Gamma }=\left( \Gamma (H_{\ell } ,H_{k})\right)
_{\ell ,k},
\end{equation*}%
and note that 
\begin{equation}
\mathsf{H}_{\ell k}=-\frac{2\rho }{D(0)}\boldsymbol{\Gamma }_{\ell k},\qquad
\ell \neq k,  \label{eq:H-offdiag}
\end{equation}
where we combined Proposition~\ref{prop:general-hessian} with %
\eqref{eq:stationarity2}.

Adding the same number to every block displacement translates the whole
configuration: 
\begin{equation*}
q_{c+s\mathbf{1}}(t)=q_c(t-s), \qquad \mathcal{R}(c+s\mathbf{1})=\mathcal{R}%
(c).
\end{equation*}
For sufficiently small $c$ and $s$, the translated endpoints 
\begin{equation*}
u_\ell(c,\rho(c))+s, \qquad v_\ell(c,\rho(c))+s
\end{equation*}
solve the signed crossing equations for $q_{c+s\mathbf{1}}$ at the common
level $\rho(c)$, and their total length remains $\mu$. Local uniqueness of
both implicit-function constructions gives 
\begin{equation*}
\rho(c+s\mathbf{1})=\rho(c),
\end{equation*}
\begin{equation*}
u_\ell(c+s\mathbf{1},\rho(c+s\mathbf{1})) = u_\ell(c,\rho(c))+s,
\end{equation*}
\begin{equation*}
v_\ell(c+s\mathbf{1},\rho(c+s\mathbf{1})) = v_\ell(c,\rho(c))+s.
\end{equation*}
Consequently $G_{c+s\mathbf{1}}=G_c+s$. Translation invariance of the circle
integrals proves 
\begin{equation*}
\mathcal{R}(c+s\mathbf{1})=\mathcal{R}(c).
\end{equation*}
Differentiating in $s$ gives 
\begin{equation*}
0=\partial_s\mathcal{R}(c+s\mathbf{1})
=\sum_{\ell=1}^m\partial_\ell\mathcal{R}(c+s\mathbf{1}).
\end{equation*}
Next evaluate at $s=0$, differentiate with respect to the block $k$ and set $%
c=0$. This shows 
\begin{equation*}
(\mathsf{H}\mathbf{1})_k=\sum_{\ell=1}^m \partial_\ell\partial_k\mathcal{R}%
(0)=0.
\end{equation*}
In other words 
\begin{equation}  \label{eq:H-neutral}
\mathsf{H}\mathbf{1}=0.
\end{equation}
This is an exact symmetry: only relative translations can contribute to the
second variation.

\section{Completion of the Proof of Theorem \protect\ref{thm:circle}}

\label{sec:9}

This section isolates the algebra that turns \eqref{eq:H-offdiag}--%
\eqref{eq:H-neutral} into a negative sum of squares for arbitrary $m$.

\subsection{Endpoint vectors}

At each outer endpoint, define 
\begin{equation*}
Y(e)=\left( \sigma _{e}H_{1}(e),\ldots ,\sigma _{e}H_{m}(e)\right)
=-\rho \left( A_{1}(e),\ldots ,A_{m}(e)\right) .
\end{equation*}%
Define $\overline{Y},x_\ell,y_\ell\in\mathbb{R}^m$ via 
\begin{equation*}
\overline{Y}=\frac{\sum_{e}\kappa _{e}Y(e)}{\sum_{e}\kappa _{e}},
\end{equation*}%
\begin{equation*}
x_{\ell }=Y(u_{\ell })-\overline{Y},\qquad \text{and}\qquad y_{\ell
}=Y(v_{\ell })-\overline{Y},
\end{equation*}%
and abbreviate 
\begin{equation*}
p_{\ell }=\kappa _{u_{\ell }},\qquad\text{and}\qquad s_{\ell }=\kappa
_{v_{\ell }}.
\end{equation*}
For every coordinate $\ell$, 
\begin{equation*}
\overline Y_\ell = \frac{\sum_e\kappa_e\sigma_eH_\ell(e)} {\sum_e\kappa_e}
= \overline z_{H_\ell}.
\end{equation*}
Thus the matrix assembled from the centered vectors $x_\ell,y_\ell$ is
precisely the boundary covariance matrix from Section~\ref{sec:8}, that is 
\begin{equation}  \label{eq:Gamma-matrix}
\boldsymbol{\Gamma }= \sum_{\ell=1}^{m} \left( p_\ell x_\ell x_\ell^{\mathsf{%
T}} + s_\ell y_\ell y_\ell^{\mathsf{T}} \right).
\end{equation}
Since 
\begin{equation*}
\sum_{\ell=1}^m H_\ell=-q\sum_{\ell=1}^m A_\ell=-q\sum_{\ell=1}^m \frac{%
Q_\ell^{\prime }}{Q_\ell}=-q^{\prime },
\end{equation*}
the endpoint orientations imply 
\begin{equation*}
\mathbf{1}\cdot Y(u_\ell) =-\sigma_{u_\ell}q^{\prime }(u_\ell) =\lvert
q^{\prime }(u_\ell)\rvert,
\end{equation*}
\begin{equation*}
\mathbf{1}\cdot Y(v_\ell) =-\sigma_{v_\ell}q^{\prime }(v_\ell) =-\lvert
q^{\prime }(v_\ell)\rvert.
\end{equation*}
Consequently, the contribution of component $\ell$ to $\mathbf{1}%
\cdot\sum_e\kappa_eY(e)$ is 
\begin{equation*}
p_\ell\,\mathbf{1}\cdot Y(u_\ell) +s_\ell\,\mathbf{1}\cdot Y(v_\ell) =1-1=0.
\end{equation*}
After summing over $\ell$, we obtain $\mathbf{1}\cdot\overline Y=0$.
Therefore, by definition 
\begin{equation}  \label{eq:xy-constraints}
p_\ell\,\mathbf{1}\cdot x_\ell=p_\ell\,\mathbf{1}\cdot (Y(u_\ell)-\overline{Y%
})=p_\ell|q^{\prime }(u_\ell)|=1.
\end{equation}
Similarly 
\begin{equation}  \label{eq:y}
s_\ell\,\mathbf{1}\cdot y_\ell=-1.
\end{equation}
Multiplying \eqref{eq:Gamma-matrix} by $\mathbf{1}$ and using %
\eqref{eq:xy-constraints}-\eqref{eq:y} gives 
\begin{align}
\boldsymbol{\Gamma}\mathbf{1 }&= \sum_{\ell=1}^{m} \left[ p_\ell x_\ell(%
\mathbf{1}\cdot x_\ell) + s_\ell y_\ell(\mathbf{1}\cdot y_\ell) \right] =
\sum_{\ell=1}^{m}(x_\ell-y_\ell).  \notag  \label{eq:Gamma-one}
\end{align}
Set 
\begin{equation*}
\widetilde{\mathsf{H}} = \frac{D(0)}{2\rho}\mathsf{H}.
\end{equation*}
For $\ell\neq k$, equation~\eqref{eq:H-offdiag} says 
\begin{equation*}
\widetilde{\mathsf{H}}_{\ell k}=-\boldsymbol{\Gamma}_{\ell k}.
\end{equation*}
Since $\widetilde{\mathsf{H}}\mathbf{1}=0$, every row sum is zero. Hence 
\begin{equation*}
\widetilde{\mathsf{H}}_{\ell\ell} = -\sum_{\ell\neq k}\widetilde{\mathsf{H}}%
_{\ell k} = \sum_{\ell\neq k}\boldsymbol{\Gamma}_{\ell k} = (\boldsymbol{%
\Gamma}\mathbf{1})_\ell-\boldsymbol{\Gamma}_{\ell\ell}.
\end{equation*}
Thus the off-diagonal formula and the neutral direction determine the whole
Hessian, entry by entry:

\begin{equation}  \label{eq:Htilde}
\widetilde{\mathsf{H}} = \text{diag}(\boldsymbol{\Gamma}\mathbf{1}) - 
\boldsymbol{\Gamma}.
\end{equation}

\subsection{An algebraic identity
}

Let $a=(a_1,\ldots,a_m)\in\mathbb{R}^m$ be a speed vector and put 
\begin{equation*}
w_\ell=a-a_\ell\mathbf{1}.
\end{equation*}
From \eqref{eq:Htilde}, 
\begin{align}
-a^{\mathsf{T}}\widetilde{\mathsf{H}}a &= a^{\mathsf{T}}\boldsymbol{\Gamma }%
a - \sum_{k=1}^m(\boldsymbol{\Gamma}\mathbf{1})_ka_k^2  \notag \\
&= \sum_{\ell=1}^{m} \left\{ p_\ell(x_\ell\cdot a)^2 + s_\ell(y_\ell\cdot
a)^2 + \sum_{k=1}^m\big(y_{\ell }-x_{\ell }\big)_ka_k^2 \right\}.
\label{eq:square-first}
\end{align}
Let us at first only consider the $\ell$-th summand. Since $a=w_\ell+a_\ell%
\mathbf{1}$, one has $a_k=(w_\ell)_k+a_\ell$ and the constraints %
\eqref{eq:xy-constraints} and \eqref{eq:y} give 
\begin{equation*}
x_\ell\cdot a =x_\ell\cdot w_\ell+\frac{a_\ell}{p_\ell}, \qquad y_\ell\cdot
a =y_\ell\cdot w_\ell-\frac{a_\ell}{s_\ell}.
\end{equation*}
Consequently, 
\begin{align*}
p_\ell(x_\ell\cdot a)^2 &+ s_\ell(y_\ell\cdot a)^2+ \sum_{k=1}^m\big(y_{\ell
}-x_{\ell }\big)_ka_k^2 =p_\ell (x_\ell\cdot w_\ell)^2+2a_\ell(x_\ell\cdot
w_\ell)+\frac{a_\ell^2}{p_\ell} \\
&+s_\ell (y_\ell\cdot w_\ell)^2-2a_\ell(y_\ell\cdot w_\ell)+\frac{a_\ell^2}{%
s_\ell} +\sum_{k=1}^m(y_\ell-x_\ell)_k\big((w_\ell)_k^2+2(w_\ell)_ka_\ell+a_%
\ell^2\big).
\end{align*}
The coefficient fo $a_\ell$ above is 
\begin{equation*}
2 \left[ x_\ell\cdot w_\ell -y_\ell\cdot w_\ell +(y_\ell-x_\ell)\cdot w_\ell %
\right]=0.
\end{equation*}
The coefficient of $a_\ell^2$ is 
\begin{equation*}
\frac1{p_\ell} +\frac1{s_\ell} +\mathbf{1}\cdot(y_\ell-x_\ell) =
\frac1{p_\ell}+\frac1{s_\ell} -\frac1{s_\ell}-\frac1{p_\ell} =0,
\end{equation*}
where we used \eqref{eq:xy-constraints} and \eqref{eq:y} again. Finally, $%
(w_\ell)_\ell=0$, and therefore 
\begin{equation*}
\sum_{k=1}^m(y_{\ell }-x_{\ell })_k(w_\ell)_k^2 = \sum_{k\neq\ell} (y_{\ell
}-x_{\ell })_k(a_k-a_\ell)^2.
\end{equation*}
Substitution in \eqref{eq:square-first} proves

\begin{equation}  \label{eq:square-completion}
-\frac{D(0)}{2\rho}a^{\mathsf{T}}\mathsf{H}a = \sum_{\ell=1}^{m} \left\{
p_\ell(x_\ell\cdot w_\ell)^2 + s_\ell(y_\ell\cdot w_\ell)^2 +
\sum_{k\neq\ell} (y_{\ell }-x_{\ell })_k(a_k-a_\ell)^2 \right\}.
\end{equation}

\subsection{The minimizing lower level set}

For the path $c=sa$, let $U_\ell$ and $V_\ell$ be the 
velocities of $u_\ell$ and $v_\ell$. Its tangent amplitude is 
\begin{equation*}
h_a=\left.\frac{d}{ds}q_{sa}\right|_{s=0}=\sum_{\ell=1}^{m}a_\ell H_\ell,
\end{equation*}
so 
\begin{equation*}
z_{h_a}(e)=a\cdot Y(e), \qquad\text{and}\qquad \overline
z_{h_a}=a\cdot\overline Y.
\end{equation*}
Lemma~\ref{lem:endpoint-velocities} gives 
\begin{equation*}
U_\ell=p_\ell\,a\cdot x_\ell, \qquad\text{and}\qquad V_\ell=-s_\ell\,a\cdot
y_\ell.
\end{equation*}
Because $a=w_\ell+a_\ell\mathbf{1}$, 
\begin{equation*}
U_\ell-a_\ell =p_\ell\,x_\ell\cdot w_\ell, \qquad\text{and}\qquad
V_\ell-a_\ell =-s_\ell\,y_\ell\cdot w_\ell.
\end{equation*}
Since $p_\ell^{-1}=\lvert q^{\prime }(u_\ell)\rvert$ and $s_\ell^{-1}=\lvert
q^{\prime }(v_\ell)\rvert$, squaring yields 
\begin{equation*}
p_\ell(x_\ell\cdot w_\ell)^2 = \lvert q^{\prime
}(u_\ell)\rvert(U_\ell-a_\ell)^2,
\end{equation*}
\begin{equation*}
s_\ell(y_\ell\cdot w_\ell)^2 = \lvert q^{\prime
}(v_\ell)\rvert(V_\ell-a_\ell)^2.
\end{equation*}
For $k\neq\ell$, we have 
\begin{align}
(y_{\ell }-x_{\ell })_k &= Y_k(v_\ell)-Y_k(u_\ell) = \rho \bigl(%
A_k(u_\ell)-A_k(v_\ell)\bigr).  \label{eq:cross-coefficient}
\end{align}
Use the fixed lift from Section~\ref{sec:5} for all roots and endpoints.
When $k\neq\ell$, the component $[u_\ell,v_\ell]$ contains no
representative, modulo $2\pi$, of a root from block $B_k$. Therefore $A_k$
is smooth on this closed interval. Differentiating its cotangent sum gives 
\begin{equation}  \label{eq:Aj-decreasing}
A_k^{\prime }(t) = -\frac14 \sum_{\theta\in B_k} \csc^2\frac{t-\theta}{2} <0,
\end{equation}
where roots are repeated according to multiplicity. Thus 
\begin{equation*}
A_k(u_\ell)-A_k(v_\ell)>0.
\end{equation*}
Combining these identities with \eqref{eq:square-completion} yields the
exact second-variation formula 
\begin{align*}
\left.\frac{d^2}{ds^2}\mathcal{R}(sa)\right|_{s=0} = -\frac{2\rho}{D(0)}
\sum_{\ell=1}^{m} \Bigg\{& \lvert q^{\prime }(u_\ell)\rvert(U_\ell-a_\ell)^2
+ \lvert q^{\prime }(v_\ell)\rvert(V_\ell-a_\ell)^2  \notag \\
&+ \rho\sum_{k\neq\ell} \bigl(A_k(u_\ell)-A_k(v_\ell)\bigr) (a_j-a_\ell)^2 %
\Bigg\}.  \label{eq:negative-squares}
\end{align*}
If $a$ is constant, then $q_{sa}(t)$ is merely a common translation of $q$,
and every term on the right vanishes, as it must. Suppose instead that $a$
is nonconstant. Choose indices $\ell\neq k$ with $a_\ell\neq a_k$. The
ordered-pair term 
\begin{equation*}
\rho \bigl(A_k(u_\ell)-A_k(v_\ell)\bigr) (a_k-a_\ell)^2
\end{equation*}
is then strictly positive; all remaining displayed terms are nonnegative.
Since $D(0)>0$ and $\rho>0$, it follows that 
\begin{equation}\label{neg:def}
\left.\frac{d^2}{ds^2}\mathcal{R}(sa)\right|_{s=0}<0.
\end{equation}
On the other hand, Section~\ref{sec:7} proves that $\mathcal{R}$ is $C^2$
near $0$, and \eqref{eq:R-local-min} makes $0$ a local minimum. Its Hessian
must therefore be positive semidefinite. This contradiction rules out $%
m\geq2 $, and we conclude that

\begin{equation*}
m=1.
\end{equation*}
Thus the minimizing lower-level set $F^*$ is a circle interval.

\subsection{Conclusion of the circle theorem}

\label{sec:10}

Let $I_{\mu }=F^{\ast }$ be the interval produced above and set 
\begin{equation*}
p_{q}(t)=e^{iNt/2}q(t)\in \mathcal{A}_{N}.
\end{equation*}%
Since $\lvert p_{q}\rvert ^{2}=q^{2}$, the admissible pair $(I_{\mu },p_{q})$
attains the global minimum, and therefore 
\begin{equation*}
\Lambda _{N}(\mu )=\frac{\int_{I_{\mu }}q^{2}}{\int_{\mathbb{T}}q^{2}}\geq
\lambda _{\min }\left( T_{I_{\mu }}^{(N)}\right) \text{.}
\end{equation*}%
Conversely, a lowest eigenfunction of $T_{I_{\mu }}^{(N)}$, together with
the same interval, is an admissible competitor in \eqref{eq:Lambda}. Hence 
\begin{equation*}
\Lambda _{N}(\mu )\leq \lambda _{\min }\left( T_{I_{\mu }}^{(N)}\right) .
\end{equation*}%
Therefore 
\begin{equation}
\Lambda _{N}(\mu )=\lambda _{\min }\left( T_{I_{\mu }}^{(N)}\right) .
\label{eq:Lambda-interval}
\end{equation}
For every $F\subset \mathbb{T}$ of measure $\mu $, 
\begin{equation*}
\lambda _{\min }\left( T_{F}^{(N)}\right) \geq \Lambda _{N}(\mu )\text{.}
\end{equation*}%
If $E=\mathbb{T}\setminus F$, then 
\begin{equation*}
\lambda _{\max }\left( T_{E}^{(N)}\right) =1-\lambda _{\min }\left(
T_{F}^{(N)}\right) \leq 1-\lambda _{\min }\left( T_{I_{\mu }}^{(N)}\right) 
\text{.}
\end{equation*}%
The complement of a circle interval is, up to rotation and null endpoints,
another circle interval. This proves Theorem~\ref{thm:circle}.

\section{Proof of Theorem~\protect\ref{thm:main}}

\label{sec:11}

\label{sec:deperiodization}

\textit{Step 1.}
First, we need to center the frequency interval. Modulation centers $\Omega $
without changing the concentration constant, and translation moves the
comparison interval without changing its concentration. We may therefore
assume 
\begin{equation*}
\Omega =[-B,B],\qquad B>0\text{.}
\end{equation*}%
The positive real-line concentration operator is 
\begin{equation*}
K_{B}(E)=M_{E}P_{[-B,B]}M_{E}\text{.}
\end{equation*}%
As $K_B(E)$ has the same eigenvalues as $K_{\lambda B}(E/\lambda)$ we assume
for simplicity that $B=1/2$ and write $K(E)=K_{1/2}(E)$. By %
\eqref{eq:operator-form}, 
\begin{equation*}
\mathfrak{C}_{[-1/2,1/2]}(E)=\lVert K(E)\rVert \text{.}
\end{equation*}%
On $L^{2}(E)$, this operator has kernel 
\begin{equation}
k(x-y),\qquad k(u)=\int_{-1/2}^{1/2}e^{2\pi i\xi u}\,d\xi =%
\begin{cases}
\dfrac{\sin (\pi u)}{\pi u}, & u\neq 0\text{,} \\[6pt] 
1, & u=0\text{.}%
\end{cases}
\label{eq:sinc-kernel}
\end{equation}%
In particular, $\lvert k(u)\rvert \leq 1$.

\textit{Step 2.} Now let us construct a dictionary for the expanding-circle limit. For $%
n\geq 1$, set 
\begin{equation}
L_{n}=2n+1\text{,}\qquad \text{and}\qquad \Delta _{n}=\frac{1}{M_{n}}.
\label{eq:expanding-parameters}
\end{equation}%
On the circle $\mathbb{R}/(L_{n}\mathbb{Z})$, let $\Pi _{n}$ be the
projection onto 
\begin{equation*}
\mathcal{H}_{n}=\text{span}_{\mathbb{C}}\{e^{2\pi ik\Delta _{n}x}:-n\leq
k\leq n\}\text{.}
\end{equation*}%
The unitary rescaling 
\begin{equation*}
(U_{n}f)(t)=\left( \frac{L_{n}}{2\pi }\right) ^{1/2}f\left( \frac{L_{n}t}{%
2\pi }\right)
\end{equation*}%
sends the symmetric modes $-n,\ldots ,n$ to $e^{-int},\ldots ,e^{int}$. If 
\begin{equation*}
\widetilde{E}_{n}=\frac{2\pi }{L_{n}}E\subset \mathbb{T}\text{,}
\end{equation*}%
then 
\begin{equation*}
U_{n}M_{E}U_{n}^{-1}=M_{\widetilde{E}_{n}},\qquad \lvert \widetilde{E}%
_{n}\rvert =\frac{2\pi }{L_{n}}\lvert E\rvert \text{.}
\end{equation*}%
Thus equality of the two sets measures is preserved under the rescaling.
Multiplication by $e^{int}$ then sends the symmetric modes to the analytic
modes $0,\ldots ,2n$ and commutes with all time indicators. The circle
theorem with degree $2n$ therefore gives 
\begin{equation}
\lVert \Pi _{n}M_{E}\Pi _{n}\rVert \leq \lVert \Pi _{n}M_{J}\Pi _{n}\rVert
\label{eq:large-circle-inequality}
\end{equation}%
whenever $E$ is measurable on $\mathbb{R}/(L_{n}\mathbb{Z})$ and $J$ is an
interval of the same measure. The projection kernel of $\Pi _{n}$ is 
\begin{equation}
k_{n}(u)=\Delta _{n}\sum_{k=-n}^{n}e^{2\pi i(k\Delta _{n})u}\text{.}
\label{eq:circle-kernel}
\end{equation}%
The points $k\Delta _{n}$ are exactly the midpoints of the $L_{n}$ equal
cells partitioning $[-1/2,1/2]$. Hence $k_{n}$ is the midpoint Riemann sum
for $k$. For $\lvert u\rvert \leq A$, a direct estimate gives 
\begin{align}
\sup_{\lvert u\rvert \leq A}\lvert k_{n}(u)-k(u)\rvert & =\sup_{|u|\leq
A}\left\vert \sum_{k=-n}^{n}\int_{\Delta _{n}(k-1/2)}^{\Delta _{n}(k+1/2)}%
\Big(e^{2\pi i\xi u}-e^{2\pi i(k\Delta _{n})u}\Big)d\xi \right\vert  \notag
\\
& \leq \sup_{|x|\leq A\Delta _{n}/2}\left\vert 1-e^{2\pi ix}\right\vert \leq 
\frac{\pi A}{2n+1}\longrightarrow 0\text{.}  \label{eq:kernel-uniform}
\end{align}%
This convergence is uniform only on bounded difference sets; the next step
is where it becomes operator-norm convergence.

\textit{Step 3.}
Now we prove the result for bounded time sets. Suppose that $E\subset (-R,R)$%
, and take $n$ so large that $L_{n}>2R$. Then $E$ embeds in the large circle
without wrap-around. Let 
\begin{equation*}
Q_{n}:\mathcal{H}_{n}\longrightarrow L^{2}(E)
\end{equation*}%
be the restriction operator. The operator $Q_{n}^{\ast }Q_{n}$ is the circle
concentration operator on $\mathcal{H}_{n}$, whereas $Q_{n}Q_{n}^{\ast }$ is
the integral operator $A_{n,E}$ on $L^{2}(E)$ with kernel $k_{n}(x-y)$. Thus 
\begin{equation}
\lVert \Pi _{n}M_{E}\Pi _{n}\rVert =\lVert A_{n,E}\rVert \text{.}
\label{eq:restriction-norm}
\end{equation}%
Let $A_{E}$ be the operator on $L^{2}(E)$ with kernel $k(x-y)$. It
represents $K(E)$, so 
\begin{equation*}
\lVert A_{E}\rVert =\lVert K(E)\rVert \text{.}
\end{equation*}%
Since $\lvert x-y\rvert \leq 2R$ for $(x,y)\in E\times E$, 
\begin{align}
\lVert A_{n,E}-A_{E}\rVert & \leq \lVert A_{n,E}-A_{E}\rVert _{\mathrm{HS}} 
\notag \\
& \leq \lvert E\rvert \sup_{\lvert u\rvert \leq 2R}\lvert
k_{n}(u)-k(u)\rvert \longrightarrow 0\text{.}  \label{eq:HS-convergence}
\end{align}%
Let $m_{E}=\lvert E\rvert $ and $I_{m_{E}}=(-m_{E}/2,m_{E}/2)$. If $E$ is
bounded, then $I_{m_{E}}$ is bounded as well. For all sufficiently large $n$%
, both sets embed in the same expanding circle. Apply %
\eqref{eq:large-circle-inequality}, then use \eqref{eq:restriction-norm}--%
\eqref{eq:HS-convergence} on both sides: 
\begin{equation}
\lVert K(E)\rVert \leq \lVert K(I_{m_{E}})\rVert \text{.}
\label{eq:bounded-real-line}
\end{equation}

\textit{Step 4.}
To conclude the proof of Theorem~\ref{thm:main} it only remains extending
the result to arbitrary finite measure sets.\ Let $E\subset \mathbb{R}$ be
measurable with $m_{E}=\lvert E\rvert <\infty $ and set 
\begin{equation*}
E_{R}=E\cap \lbrack -R,R],\qquad m_{R}=\lvert E_{R}\rvert \text{.}
\end{equation*}%
Then $m_{R}\uparrow m_{E}$. The full-space kernels of $K(E)$ and $K(E_{R})$
differ only on 
\begin{equation*}
(E\times E)\setminus (E_{R}\times E_{R})\text{,}
\end{equation*}%
whose measure is $m_{E}^{2}-m_{R}^{2}$. Since $\lvert k\rvert \leq 1$, 
\begin{equation}
\lVert K(E)-K(E_{R})\rVert \leq \sqrt{m_{E}^{2}-m_{R}^{2}}\longrightarrow 0%
\text{.}  \label{eq:truncation}
\end{equation}%
The same estimate applies to the nested centered intervals $I_{m_{R}}\subset
I_{m_{E}}$. Apply (\ref{eq:bounded-real-line}) to $E_{R}$: 
\begin{equation*}
\lVert K(E_{R})\rVert \leq \lVert K(I_{m_{R}})\rVert \text{.}
\end{equation*}%
Let $R\rightarrow \infty $ and use (\ref{eq:truncation}). We obtain 
\begin{equation*}
\lVert K(E)\rVert \leq \lVert K(I_{m_{E}})\rVert \text{,}
\end{equation*}%
which is (\ref{eq:main-concentration}). Taking square roots in (\ref%
{eq:operator-form}) yields the equivalent operator-norm formulation in
Theorem~\ref{thm:main} and the proof is concluded.
\section*{Acknowledgements}
We are extremely grateful
to Open AI and to the Large Language Model GPT 5.6 Sol.
This research was funded in part by the Austrian Science Fund (FWF) through
the projects 10.55776/PAT8205923 (L.D.A.) and 10.55776/PAT1384824 (M.S.).
For open access purposes, the authors have applied a CC BY public copyright
license to any author-accepted manuscript version arising from this
submission.

\end{document}